\providecommand\@dotsep{5}
\def\listtodoname{List of Todos}
\def\listoftodos{\@starttoc{tdo}\listtodoname}
\numberwithin{equation}{section}
\def\dis{\displaystyle}
\newtheorem{lemma}{Lemma}
\newtheorem{proposition}{Proposition}
\newtheorem{theorem}{Theorem}
\newtheorem*{theorem*}{Theorem}
\newtheorem{remark}{Remark}
\title[A quasilinear Schr\"odinger-Poisson system]
{quasi-linear Schr\"odinger-Poisson system \\
under an exponential critical nonlinearity:\\
existence and asymptotic behaviour of solutions}
\author[G. M. Figueiredo]{Giovany M. Figueiredo}
\author[G. Siciliano]{Gaetano Siciliano}
\address[G. M. Figueiredo]{\newline\indent Universidade de Bras\'ilia-UNB
\newline\indent 
Departamento de Matem\'atica
\newline\indent
CEP: 70910-900, Bras\'ilia, DF, Brazil}
\email{\href{mailto:giovany@unb.br}{giovany@unb.br}}
\address[G. Siciliano]{\newline\indent Departamento de Matem\'atica
\newline\indent 
 Universidade de S\~ao Paulo 
\newline\indent 
Rua do Mat\~ao 1010,  05508-090 S\~ao Paulo, SP, Brazil }
\email{\href{mailto:sicilian@ime.usp.br}{sicilian@ime.usp.br}}
\thanks{The authors are partially supported by
CNPq, Capes,  FAPDF and Fapesp, Brazil.}
\subjclass[2010]{
35Q60,  	
35J10,  	
35J50,  	
35J92,  	
35J61.  	
}
\keywords{Variational methods, nonlocal problems,
Schr\"odinger-Poisson equation, exponential critical growth.}
\begin{document}

\begin{abstract}
In this paper  we consider 
the following quasilinear Schr\"odinger-Poisson system  in 
a bounded domain in $\mathbb{R}^{2}$:
$$
\left\{
\begin{array}[c]{ll}
- \Delta u +\phi u =  f(u) &\ \mbox{in }  \Omega, \\
-\Delta \phi - \varepsilon^{4}\Delta_4 \phi = u^{2} &   \ \mbox{in }  \Omega,\\
u=\phi=0  & \  \mbox{on }  \partial\Omega 
\end{array}
 \right.
$$
depending on the  parameter $\varepsilon>0$. 
The nonlinearity $f$ is assumed to have critical exponencial growth. 
We first prove existence 
of nontrivial solutions $(u_{\varepsilon}, \phi_{\varepsilon})$
and then we show that as $\varepsilon\to0^{+}$ these solutions converges to a 
nontrivial solution of the associated Schr\"odinger-Poisson system, that is by making $\varepsilon=0$
in the system above.

\end{abstract}

\maketitle

\section{Introduction}

In this paper we study the following system
\begin{equation}\label{eq:P}\tag{$P_{\varepsilon}$}
\left\{
\begin{array}[c]{ll}
-\Delta u +\phi u =  f(u) & \ \mbox{in }  \Omega, \medskip\\
 -\Delta \phi -\varepsilon^{4}\Delta_4 \phi =u^{2}& \ \mbox{in }  \Omega,\medskip\\
u=\phi=0 &\ \mbox{on }  \partial\Omega
\end{array}
 \right.
\end{equation}
where $\Omega\subset \mathbb R^{2}$ is a smooth and bounded domain,
$\Delta_{4} = \text{div} (|\nabla \phi|^{2} \nabla \phi)$ is the $4-$Laplacian
 and $f$ satisfies suitable assumptions, allowing to have critical growth.

Problem \eqref{eq:P} is the planar version of the so called 
quasilinear Schr\"odinger-Poisson system which,
after the papers
\cite{BK, IKL} has attracted the attention of mathematicians in these recent
years. However few papers deal with this kind of system.
We cite here  \cite{ILTZ}  where the authors 
consider  the quasilinear Schr\"odinger-Poisson system in the unitary cube in $\mathbb R^{3}$ under periodic 
boundary conditions; they show global existence and uniqueness of solutions.
In \cite{LY} the author proves
 existence and uniqueness of a global mild solution   in the one dimensional case.
In the recent paper \cite{DLMZ}, the problem in $\mathbb R^{3}$ with an 
asymptotically linear $f$ is considered. 
The authors prove existence and the  behaviour of the ground state solutions as  $\varepsilon\to 0^{+}$.
Again the solutions converge to the solution of the ``limit'' problem with $\varepsilon=0$.
Finally in \cite{GG} we studied the problem in $\mathbb R^{3}$ under a critical nonlinearity,
showing again that the solutions converge to a solution of the Schr\"odinger-Poisson system.



\medskip

As explained in \cite{AA} (see also \cite{BK,IKL})
the system appears 
by studying a quantum physical model  of extremely small devices in semi-conductor nanostructures
and takes into account the quantum structure and   the longitudinal field oscillations during the beam propagation.
This is reflected into the fact that 
the  dielectric permittivity depends on the electric field by
 \begin{equation*}\label{eq:dielectr}
 {c}_{\textrm{diel}}(\nabla \phi)= 1+\varepsilon^{4}|\nabla \phi|^{2}, \quad \varepsilon>0 \ \textrm{ and constant.}
 \end{equation*} 
We refere the reader to \cite{GG} where the  system is deduced in the framework of Abelian Gauge Theories.

\medskip

Before to state our results let us introduce some notations. In this paper we fix
an arbitrary  $r>2$ and consider the auxiliary problem
\begin{equation}\label{eq:A}\tag{$A$}
\begin{cases}
-\Delta u = |u|^{r-2}u &   \mbox{ in }  \Omega, \smallskip\\ 
 u=0 & \mbox{ on }\partial\Omega.
\end{cases}
\end{equation}
 Let 
$$
R(u)=\frac 12\displaystyle\int_{\Omega}|\nabla u|^{2}- \frac 1r\displaystyle\int_{\Omega}|u|^{r}
$$
be the  the functional associated to  problem \eqref{eq:A}  and let
$$
\mathcal{N}=\left\{u \in H^{1}_0(\Omega)\setminus\{0\}:  R'(u)[u]=0\right\}.
$$
be the Nehari manifold.
Hereafter $H^{1}_{0}(\Omega)$ is the usual Sobolev space endowed with scalar product and (squared) norm
given by
$$\langle u, v\rangle:=\int_{\Omega}\nabla u\nabla v, \quad \|u\|^{2} =\int_{\Omega}|\nabla u|^{2}.$$
We have $H^{1}_0(\Omega)\hookrightarrow L^{p}(\Omega)$, for $p\geq1$.
The $L^{p}-norm$ will be simply denoted with $|\cdot |_{p}$.

Standard arguments give the existence of a ground state $\mathfrak u\in H^{1}_{0}(\Omega)$ 
for problem \eqref{eq:A} which satisfies
$$
\mathfrak m:=R(\mathfrak u) = \min_{\mathcal{N}} R, \ \ R'(\mathfrak u)=0
$$
and
\begin{eqnarray}\label{estimatetoc_r} 
\mathfrak m=\frac{r-2}{2r}\displaystyle\int_{\Omega}|\mathfrak u|^{r} =\frac{r-2}{2r}\|\mathfrak u\|^{2}.
\end{eqnarray}

\medskip

Now we can state our assumptions on $f$ in order to study problem \eqref{eq:P}.

 Let  $f:\mathbb{R} \rightarrow\mathbb{R}$ be  a continuous
function such that\medskip
\begin{enumerate}[label=(f\arabic*),ref=f\arabic*,start=0]
\item\label{f_{0}} $f(t)=0$ for $t\leq0$, \medskip
\item \label{f_{1}}$ \lim_{t \rightarrow 0}\dis \frac{f(t)}{t}=0,$  \medskip
\item\label{f_{2}}there exists $\alpha_0 >  0$ such that 
$$\displaystyle\lim_{t \to \infty }\frac{f(t)}{\exp(\alpha t^{2})} 
= 
\begin{cases}
0 & \mbox{ for } \alpha > \alpha_{0} \smallskip \\
\infty & \mbox{ for }  \alpha < \alpha_{0}, 
\end{cases}
$$
\medskip

\item\label{f_{3}} there exists $\theta \in(4,+\infty)$ such that
$$
0<\theta F(t)=\theta\int^{t}_{0}f(s)ds \leq tf(t), \quad
\mbox{for all} \ \ t>0,
$$
\item\label{f_{4}} there is  $\tau\geq\tau^{*}(\varepsilon)$ such that 
$$
f(t) \geq \tau t^{r-1}, \quad \forall t\geq0
$$
where 
$$
\tau^*(\varepsilon) := \max\biggl\{
\biggl[\frac{  \theta  \mathfrak m (\alpha_{0}+1) }{\pi(\theta-2)}\biggl]^{(r-2)/2}, 
\biggl[\frac{4\theta \mathfrak m}{(\theta -2) \overline T^{2}(\varepsilon)}\biggl]^{(r-2)/2}
\biggl\},
$$
and where $\overline T(\varepsilon)>0$ will appear later.
\end{enumerate}


\medskip

We would like to highlight that  the model  nonlinearity 
\begin{eqnarray*}\label{exemplodef}
f(t)= 
\begin{cases}
\tau^{*}(\varepsilon) |t|^{r-2}t \exp(\alpha_0 t^{2}) & \mbox{ for } t\geq0\\
0 & \mbox{ for } t\leq0.
\end{cases}
\end{eqnarray*} 
satisfies  all the  assumptions above.

\medskip

We define
$$X:=H^{1}_0(\Omega)\cap W^{1,4}_0(\Omega)$$
which  is a Banach space under the norm
$$\|\phi\|_{X}:=|\nabla \phi|_{2} + |\nabla \phi|_{4}.$$
Note that $X\hookrightarrow L^{\infty}(\Omega)$.

%

\bigskip

By a solution of \eqref{eq:P} we mean a pair 
$(u_{\varepsilon}, \phi_{\varepsilon})\in H^{1}_0(\Omega)\times X$ such that
\begin{eqnarray}\label{eq:ws1}
\forall v\in H^{1}_0(\Omega): \quad \int_{\Omega} \nabla u_{\varepsilon} \nabla v 
+\int_{\Omega} \phi_{\varepsilon} u_{\varepsilon}v
=\int_{\Omega} f(u_{\varepsilon}) v\\
\label{eq:ws2}
 \forall \xi \in X:\quad  \int_{\Omega} \nabla \phi_{\varepsilon} \nabla \xi
+\varepsilon^{4}\int_{\Omega} |\nabla \phi_{\varepsilon}|^{2}\nabla  \phi_{\varepsilon}\nabla \xi 
= \int_{\Omega} \xi u_{\varepsilon}^{2} .
\end{eqnarray}

\medskip

The main results of this paper are the following.
\begin{theorem} \label{teorema1}
Assume that conditions \eqref{f_{0}}-\eqref{f_{4}} hold.
Then,  for every $\varepsilon>0$ problem \eqref{eq:P} admit a solution 
$(u_{\varepsilon}, \phi_{\varepsilon})\in H^{1}_0(\Omega)\times X$.
Moreover  $\phi_{\varepsilon}, u_{\varepsilon}$ are nonnegative.
\end{theorem}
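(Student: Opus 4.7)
My plan is the classical reduction method for Schr\"odinger--Poisson systems, combined with a mountain pass argument. For fixed $u\in H^{1}_{0}(\Omega)$, the second equation in \eqref{eq:P} is the Euler--Lagrange equation of the strictly convex, coercive functional
\[
J_{u}(\phi) := \frac{1}{2}|\nabla\phi|_{2}^{2} + \frac{\varepsilon^{4}}{4}|\nabla\phi|_{4}^{4} - \int_{\Omega}\phi\,u^{2}
\]
on $X$; hence it admits a unique minimizer $\phi_{\varepsilon}(u)\in X$, which is nonnegative by testing with $\phi_{\varepsilon}(u)^{-}$ and using monotonicity of the principal part. One then checks that $u\mapsto\phi_{\varepsilon}(u)$ is $C^{1}$ and, using the identity $\int_{\Omega}\phi_{\varepsilon}(u)u^{2}=|\nabla\phi_{\varepsilon}(u)|_{2}^{2}+\varepsilon^{4}|\nabla\phi_{\varepsilon}(u)|_{4}^{4}$, that critical points of the reduced $C^{1}$ functional
\[
I_{\varepsilon}(u) := \frac{1}{2}\|u\|^{2} + \frac{\varepsilon^{4}}{4}|\nabla\phi_{\varepsilon}(u)|_{4}^{4} - \int_{\Omega}F(u)
\]
correspond exactly to weak solutions of \eqref{eq:P} in the sense of \eqref{eq:ws1}--\eqref{eq:ws2}.

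\textbf{Mountain pass geometry.} Assumptions \eqref{f_{1}}--\eqref{f_{2}} combined with a Moser--Trudinger inequality applied to $\int_{\Omega}F(u)$ yield a constant $\rho>0$ such that $I_{\varepsilon}(u)\geq\rho$ whenever $\|u\|$ is small. For a fixed nonzero nonnegative $u_{0}$, the Ambrosetti--Rabinowitz condition \eqref{f_{3}} with $\theta>4$ gives $F(s)\gtrsim s^{\theta}$ for $s$ large, while testing the equation for $\phi_{\varepsilon}(tu_{0})$ with $\phi_{\varepsilon}(tu_{0})$ itself yields a polynomial bound $|\nabla\phi_{\varepsilon}(tu_{0})|_{4}^{4}\lesssim t^{4}$; since $\theta>4$ we conclude $I_{\varepsilon}(tu_{0})\to-\infty$. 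The mountain pass level
\[
c_{\varepsilon} := \inf_{\gamma\in\Gamma}\max_{t\in[0,1]}I_{\varepsilon}(\gamma(t))
\]
is therefore well defined and positive.

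\textbf{Main obstacle: level estimate.} The heart of the argument is to prove that $c_{\varepsilon}$ stays strictly below the Moser--Trudinger critical threshold associated with $\alpha_{0}$, since this is what restores compactness of $(PS)_{c_{\varepsilon}}$ sequences in presence of the exponential critical growth \eqref{f_{2}}. Taking as path the ray generated by the ground state $\mathfrak u$ of \eqref{eq:A} and using \eqref{f_{4}}, I estimate
\[
I_{\varepsilon}(t\mathfrak u)\leq \frac{t^{2}}{2}\|\mathfrak u\|^{2} - \frac{\tau\,t^{r}}{r}\int_{\Omega}|\mathfrak u|^{r} + \frac{\varepsilon^{4}}{4}|\nabla\phi_{\varepsilon}(t\mathfrak u)|_{4}^{4}.
\]
Maximizing in $t$ and exploiting the Nehari identity \eqref{estimatetoc_r}, the leading term collapses to $\mathfrak m\,\tau^{-2/(r-2)}$. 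The two entries in the definition of $\tau^{*}(\varepsilon)$ are calibrated exactly so that: (i) the first bound on $\tau$ pushes this leading term below the Moser--Trudinger threshold, which explains the factors $(\alpha_{0}+1)/\pi$ and $\theta/(\theta-2)$ produced by the AR computation; (ii) the second bound controls the quasilinear correction $\tfrac{\varepsilon^{4}}{4}|\nabla\phi_{\varepsilon}(t\mathfrak u)|_{4}^{4}$ uniformly on an interval $t\in[0,\overline T(\varepsilon)]$ in which the maximum must be located. This is the step in which the parameter $\overline T(\varepsilon)$ announced in \eqref{f_{4}} naturally appears.

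\textbf{Compactness and positivity.} A standard computation using $I_{\varepsilon}(u_{n})-\tfrac{1}{\theta}I'_{\varepsilon}(u_{n})[u_{n}]$, together with \eqref{f_{3}} and the nonnegativity of the $\phi$-term, shows that any $(PS)_{c_{\varepsilon}}$ sequence $(u_{n})$ is bounded in $H^{1}_{0}(\Omega)$. Up to a subsequence $u_{n}\rightharpoonup u_{\varepsilon}$; the level bound together with a Lions-type concentration argument for Moser--Trudinger yields $f(u_{n})\to f(u_{\varepsilon})$ in $L^{1}(\Omega)$, from which one deduces $u_{n}\to u_{\varepsilon}$ strongly in $H^{1}_{0}(\Omega)$. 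Thus $u_{\varepsilon}$ is a nontrivial critical point of $I_{\varepsilon}$ with $I_{\varepsilon}(u_{\varepsilon})=c_{\varepsilon}>0$, hence $u_{\varepsilon}\not\equiv 0$. Finally, testing \eqref{eq:ws1} with $u_{\varepsilon}^{-}$ and using \eqref{f_{0}} together with $\phi_{\varepsilon}\geq 0$ gives $u_{\varepsilon}\geq 0$, completing the proof.
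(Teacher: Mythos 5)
Your overall skeleton (reduction to a single-variable functional, mountain pass, level estimate below the Moser--Trudinger threshold, compactness, positivity by testing with negative parts) matches the paper's strategy, but two points are genuinely wrong or unproved. First, your reduced functional is mis-stated: substituting $\phi_{\varepsilon}(u)$ into $\mathcal J_{\varepsilon}$ and using \eqref{eq:sostituicao} gives
\begin{equation*}
J_{\varepsilon}(u)=\frac 12\|u\|^{2}+\frac 14\int_{\Omega}|\nabla\phi_{\varepsilon}(u)|^{2}
+\frac{3\varepsilon^{4}}{8}\int_{\Omega}|\nabla\phi_{\varepsilon}(u)|^{4}-\int_{\Omega}F(u),
\end{equation*}
whereas your $I_{\varepsilon}$ drops the term $\frac 14|\nabla\phi_{\varepsilon}(u)|_{2}^{2}$ and carries $\frac{\varepsilon^{4}}{4}$ instead of $\frac{3\varepsilon^{4}}{8}$. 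With your formula the chain-rule terms involving $\Phi_{\varepsilon}'(u)$ do \emph{not} cancel (the cancellation comes precisely from $\partial_{\phi}\mathcal J_{\varepsilon}(u,\phi_{\varepsilon}(u))=0$ applied to the full functional), so critical points of your $I_{\varepsilon}$ are not solutions of \eqref{eq:P}.

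Second, and more seriously, your level estimate does not close without the truncation that the paper introduces. You claim the maximum along the ray $t\mathfrak u$ ``collapses to $\mathfrak m\,\tau^{-2/(r-2)}$'' and that the second entry of $\tau^{*}(\varepsilon)$ controls the quasilinear correction on an interval $[0,\overline T(\varepsilon)]$ where the maximum lies. Neither assertion is substantiated: in your truncation-free scheme $\overline T(\varepsilon)$ has no definition at all (in the paper it is $\overline T(\varepsilon):=K_{\varepsilon}\|\mathfrak u\|/2$, where $K_{\varepsilon}$ comes from Lemma \ref{parametrolimitado} and bounds the ray-maximizers $\mathfrak t_{\varepsilon,T}$ of the \emph{truncated} functionals uniformly in $T$ --- so your argument is circular), and the second entry of $\tau^{*}(\varepsilon)$ is not used to bound $|\nabla\phi_{\varepsilon}(t\mathfrak u)|_{4}^{4}$: its role (Lemma \ref{estimatetominimizingsequence}) is to force $\limsup_{n}\|u_{n}\|^{2}\leq\overline T^{2}(\varepsilon)/2$, so that the $(PS)$ sequence stays in the region where $h_{\overline T(\varepsilon)}\equiv 1$ and $J_{\varepsilon}^{\overline T(\varepsilon)}$ coincides with $J_{\varepsilon}$. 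The paper's truncation $h_{T}(u)=\psi(\|u\|^{2}/T^{2})$ is designed exactly so that $h_{\overline T(\varepsilon)}$ vanishes at the relevant point of the ray, making the positive $\phi$-terms disappear from the estimate in Lemma \ref{estimateonc} and yielding $c_{\varepsilon}^{\overline T(\varepsilon)}\leq\pi(\theta-2)/(\theta(\alpha_{0}+1))$. In your version the positive term $\frac 14|\nabla\phi_{\varepsilon}(t\mathfrak u)|_{2}^{2}+\frac{3\varepsilon^{4}}{8}|\nabla\phi_{\varepsilon}(t\mathfrak u)|_{4}^{4}$ survives at the ray maximum, no hypothesis in \eqref{f_{0}}--\eqref{f_{4}} makes it small relative to the Moser--Trudinger gap, and consequently the bound $\limsup_{n}\|u_{n}\|^{2}\leq 2\pi/(\alpha_{0}+1)$ on which your own compactness step relies (and which the paper gets in Lemma \ref{estimatetominimizingsequence}, then exploits via the Trudinger--Moser inequality, weak $L^{p}$ convergence of $\exp((\alpha_{0}+1)u_{n}^{2})$ and Brezis--Lieb in Lemma \ref{convergenciafeFcritical}) is not established. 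To repair this you would either have to adopt the paper's truncation, or prove a genuinely new quantitative bound showing the $\phi$-terms along the optimal path are absorbed below the threshold --- which your sketch does not do.
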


We study also the asymptotic behaviour of the solutions $u_{\varepsilon}, \phi_{\varepsilon}$
as $\varepsilon\to 0^{+}$ obtaining the following

\begin{theorem}\label{th:2}
Let $f:\mathbb R\to \mathbb R $ be a function satisfying
conditions \eqref{f_{0}}-\eqref{f_{2}} and
consider  the  Schr\"odinger-Poisson system \medskip
\begin{equation*}\label{eq:P0}\tag{$P_{0}$}
\left\{
\begin{array}[c]{ll}
-\Delta u +\phi u =  f(u) & \ \mbox{in }  \Omega, \medskip\\
 -\Delta \phi  =u^{2}& \ \mbox{in }  \Omega,\medskip\\
u=\phi=0 &\ \mbox{on }  \partial\Omega.
\end{array}
 \right.
\end{equation*}
If $\{u_{\varepsilon}, \phi_{\varepsilon}\}_{\varepsilon>0}$ are  solutions of \eqref{eq:P}
satisfying also   $\|u_{\varepsilon}\|^{2}\leq 2\pi/( \alpha_{0}+1)$ 
then, \medskip 
\begin{itemize}
\item[1.] $\lim_{\varepsilon\to0^{+}} u_{\varepsilon} = u_{0}$ in $ H^{1}_{0}(\Omega)$,\medskip
\item[2.]  $\lim_{\varepsilon\to0^{+}} \phi_{\varepsilon} = \phi_{0}$ in $H^{1}_{0}(\Omega)$,\medskip
\end{itemize}
where $(u_{0}, \phi_{0})$ is a nontrivial solution of \eqref{eq:P0}.
\end{theorem}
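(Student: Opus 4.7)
The overall strategy is the standard three-step variational passage-to-the-limit: uniform bounds, weak limits, identification of the limit as a solution of $(P_0)$, and finally upgrade to strong convergence plus non-triviality. The role of the hypothesis $\|u_\varepsilon\|^2\le 2\pi/(\alpha_0+1)$ is to make the Trudinger--Moser machinery usable along the whole family.

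\textbf{Step 1: uniform bounds and weak limits.} The bound on $\|u_\varepsilon\|$ is already given. Testing \eqref{eq:ws2} with $\xi=\phi_\varepsilon$ yields
\[
\|\phi_\varepsilon\|^{2}+\varepsilon^{4}|\nabla\phi_\varepsilon|_{4}^{4}=\int_\Omega \phi_\varepsilon u_\varepsilon^{2}\le C\,|\phi_\varepsilon|_{4}|u_\varepsilon|_{8/3}^{2}\le C\|\phi_\varepsilon\|\|u_\varepsilon\|^{2},
\]
hence $\{\phi_\varepsilon\}$ is bounded in $H^{1}_{0}(\Omega)$ and $\varepsilon^{4}|\nabla\phi_\varepsilon|_{4}^{4}=O(1)$. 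Up to a subsequence, $u_\varepsilon\rightharpoonup u_0$ and $\phi_\varepsilon\rightharpoonup \phi_0$ in $H^{1}_{0}(\Omega)$, and by Rellich both converge strongly in every $L^{p}(\Omega)$, $p\ge 1$.

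\textbf{Step 2: the limit pair solves $(P_0)$.} For \eqref{eq:ws2} take $\xi\in C^{\infty}_{c}(\Omega)\subset X$. The quasilinear correction satisfies
\[
\Bigl|\varepsilon^{4}\!\int_{\Omega}|\nabla\phi_\varepsilon|^{2}\nabla\phi_\varepsilon\nabla\xi\Bigr|\le \varepsilon\,\bigl(\varepsilon|\nabla\phi_\varepsilon|_{4}\bigr)^{3}|\nabla\xi|_{4}\longrightarrow 0,
\]
and the rest of \eqref{eq:ws2} passes to the limit by weak convergence and the $L^{2}$-convergence of $u_\varepsilon^{2}$; density extends the limit identity to $\xi\in H^{1}_{0}(\Omega)$. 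For \eqref{eq:ws1} the delicate term is $\int f(u_\varepsilon)v$. Here I would exploit $(f_1)$--$(f_2)$ to bound $|f(t)|\le C|t|+C|t|^{q-1}(e^{\alpha t^{2}}-1)$ for any $\alpha>\alpha_0$, $q>2$; the bound $\|u_\varepsilon\|^{2}\le 2\pi/(\alpha_0+1)$ allows choosing $\alpha>\alpha_0$ and $s>1$ with $\alpha s\|u_\varepsilon\|^{2}<4\pi$, so that Trudinger--Moser gives $\sup_\varepsilon\int(e^{\alpha s u_\varepsilon^{2}}-1)<\infty$. By Vitali's convergence theorem together with a.e.~convergence of $u_\varepsilon$, one concludes $f(u_\varepsilon)\to f(u_0)$ in $L^{s'}(\Omega)$ for some $s'>1$, and hence $\int f(u_\varepsilon)v\to\int f(u_0)v$ for every $v\in H^{1}_{0}(\Omega)$. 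The nonlocal term $\int \phi_\varepsilon u_\varepsilon v$ passes to the limit by Hölder and the strong $L^{p}$ convergence of $u_\varepsilon,\phi_\varepsilon$. Thus $(u_0,\phi_0)$ solves \eqref{eq:P0}.

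\textbf{Step 3: strong convergence.} For $u_\varepsilon$, I test \eqref{eq:ws1} with $u_\varepsilon-u_0$:
\[
\|u_\varepsilon\|^{2}-\langle u_\varepsilon,u_0\rangle=\int_{\Omega}f(u_\varepsilon)(u_\varepsilon-u_0)-\int_{\Omega}\phi_\varepsilon u_\varepsilon(u_\varepsilon-u_0).
\]
Both integrals on the right vanish in the limit (the first by the $L^{s'}$-convergence of $f(u_\varepsilon)$ and the $L^{s}$-convergence of $u_\varepsilon-u_0$; the second by Hölder and strong $L^{p}$-convergence). Since $\langle u_\varepsilon,u_0\rangle\to\|u_0\|^{2}$, it follows that $\|u_\varepsilon\|\to\|u_0\|$, hence $u_\varepsilon\to u_0$ strongly in $H^{1}_{0}(\Omega)$. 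For $\phi_\varepsilon$, test \eqref{eq:ws2} with $\phi_\varepsilon$ itself:
\[
\|\phi_\varepsilon\|^{2}+\varepsilon^{4}|\nabla\phi_\varepsilon|_{4}^{4}=\int_{\Omega}\phi_\varepsilon u_\varepsilon^{2}\longrightarrow \int_{\Omega}\phi_0 u_0^{2}=\|\phi_0\|^{2},
\]
the last equality coming from $(P_0)$ tested with $\phi_0$. Combined with weak lower semicontinuity this forces both $\varepsilon^{4}|\nabla\phi_\varepsilon|_{4}^{4}\to 0$ and $\|\phi_\varepsilon\|\to\|\phi_0\|$, hence the strong $H^{1}_{0}$-convergence of $\phi_\varepsilon$.

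\textbf{Step 4: non-triviality.} From Theorem \ref{teorema1}, the solutions lie on the Nehari manifold of $(P_\varepsilon)$, so they satisfy a lower bound $\|u_\varepsilon\|\ge c_0>0$ independent of $\varepsilon$ (this follows from $(f_1)$, $(f_2)$ and the critical Trudinger--Moser estimate valid in the regime $\|u_\varepsilon\|^{2}\le 2\pi/(\alpha_0+1)$). Passing to the limit gives $\|u_0\|\ge c_0>0$, so $u_0\neq 0$ and $(u_0,\phi_0)$ is a nontrivial solution of \eqref{eq:P0}. The main obstacle is precisely the passage to the limit in $\int f(u_\varepsilon) v$ under exponential critical growth; the bound $\|u_\varepsilon\|^{2}\le 2\pi/(\alpha_0+1)$ is tailored to place the family strictly below the Trudinger--Moser threshold, which is what makes the Vitali step available.
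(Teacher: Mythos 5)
Your proposal is correct and follows the same overall skeleton as the paper (weak limits, passage to the limit in both weak formulations, strong convergence of $u_\varepsilon$ by testing with $u_\varepsilon-u_0$, which is exactly the paper's combination of $J_\varepsilon'(u_\varepsilon)[u_\varepsilon]=0$ and $J_\varepsilon'(u_\varepsilon)[u_0]=0$, and non-triviality via a uniform lower bound on $\|u_\varepsilon\|$), but you supply the two key technical ingredients differently. For the convergence of $\phi_\varepsilon$ the paper invokes Lemma \ref{lem:kavianlem}, whose proof is delegated to \cite[Lemma 3.2]{BK}; you instead argue self-containedly, testing the second equation with $\phi_\varepsilon$ and using $\int_\Omega\phi_\varepsilon u_\varepsilon^2\to\int_\Omega\phi_0 u_0^2$ together with weak lower semicontinuity to extract simultaneously $\|\phi_\varepsilon\|\to\|\phi_0\|$ and $\varepsilon^4|\nabla\phi_\varepsilon|_4^4\to0$ (the analogue of the second assertion of Lemma \ref{lem:kavianlem}); your $O(\varepsilon)$ bound on the quartic term in the weak formulation is likewise a clean, direct substitute. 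For the nonlinear term, the paper's Lemma \ref{convergenciafeFcritical} builds an $L^1$-dominating function via weak $L^p$-convergence of the exponentials (\cite[Lemma 4.8]{Kavian}) and the Brezis--Lieb lemma, whereas you use Vitali's theorem with the uniform Trudinger--Moser bound in the regime $\alpha s\|u_\varepsilon\|^2<4\pi$: equivalent machinery exploiting the hypothesis $\|u_\varepsilon\|^2\le 2\pi/(\alpha_0+1)$ in the same way. Your non-triviality step is in fact better adapted to the statement: the paper's displayed chain uses a bound of the form $\int_\Omega f(u_\varepsilon)u_\varepsilon\le\tau\int_\Omega|u_\varepsilon|^r$, which sits uneasily with the hypothesis that $f$ satisfies only \eqref{f_{0}}--\eqref{f_{2}}, while you derive $\|u_\varepsilon\|\ge c_0>0$ from \eqref{f_{1}}, \eqref{f_{2}} and Trudinger--Moser alone. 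One small inaccuracy: you attribute the solutions to Theorem \ref{teorema1} and say they ``lie on the Nehari manifold,'' but the hypothesis merely assumes they are solutions; since any solution satisfies $J_\varepsilon'(u_\varepsilon)[u_\varepsilon]=0$ and $\int_\Omega\phi_\varepsilon u_\varepsilon^2\ge 0$ by \eqref{eq:sostituicao}, your lower-bound argument goes through verbatim, and, as in the paper, the convergence of the full family (rather than a subsequence) rests on the same implicit extraction, so no gap relative to the paper arises.
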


In particular Theorem \ref{th:2} gives the existence of a nontrivial solution
for \eqref{eq:P0}, which we were not able to find in the mathematical literature
under our assumption on $f$.

\medskip

Our contribution in this paper is to give a better understanding on this
quasilinear problem, on which there are just few papers (cited above)
in the literature. Moreover, to the best of our knowledge,
 this is the first paper dealing with the two dimensional
case and involving a critical nonlinearity; and in fact  the main 
difficulties are related to (i) the ``fourth'' order term in the equation
(hence in particular any homogeneity property is lost) and
(ii)  to the critical growth of the nonlinearity. 

\medskip

We find solutions by using variational methods by using Mountain Pass arguments.
Indeed the solutions will be critical points of a functional $J_{\varepsilon}$.
However, to avoid the previous difficulties,
we introduce a suitable truncated functional, $J_{\varepsilon}^{T}$, depending on a parameter $T>0$,
in such a way that  we have compactness at the  Mountain
Pass level of the truncated functional, and even more, we can recover
a critical point  of the untruncated functional.
Then by using suitable estimates with respect to $\varepsilon$ we are able to
show that the solutions of \eqref{eq:P} tends, as $\varepsilon$ tends do zero,
to a nontrivial solution of the Schr\"odiger-Poisson system.

\medskip

The paper is organized as follows.

In Section \ref{sec:VF} we introduce the variational framework, by
defining a  $C^{1}$ functional $J_{\varepsilon}$  naturally associated to \eqref{eq:P}. 

The truncated functional 
$J_{\varepsilon}^{T}$ is 
introduced in Section \ref{sec:trunc}, where we prove also
a suitable estimate on its Mountain Pass level.

In Section \ref{sec:finalsec}  we show that, for a suitable choice of the 
truncation parameter $T$, the Mountain Pass level of the $J_{\varepsilon}^{T}$
 satisfies an estimate which permits  to have compactness and recover a 
critical point of $J_{\varepsilon}$, hence a solution of \eqref{eq:P}, proving Theorem \ref{teorema1}.

Finally in Section \ref{sec:finalissima} we prove Theorem \ref{th:2}.

\medskip

As a matter of notation we use  for brevity the notation $\int_{\Omega} w$ to mean $\int_{\Omega}w(x)dx.$

\section{The variational framework}\label{sec:VF}
Let us start by noticing that
from $\eqref{f_{1}}-\eqref{f_{3}}$, 
for all $\delta >0$ and for all $\alpha>\alpha_0$, there exist constants $C_{\delta },
\widetilde C_{\delta }>0$ such that 
\begin{eqnarray}\label{iff}
\int_\Omega f(u)u \leq \delta  \displaystyle\int_{\Omega}u^2  +C_{\delta}\displaystyle\int_{\Omega}|u|^{q}\exp(\alpha u^{2} )
\end{eqnarray}
and 
\begin{eqnarray}\label{iff1}
\int_\Omega F(u)\leq \delta  \displaystyle\int_{\Omega}u^2  +\widetilde{C}_{\delta}\displaystyle\int_{\Omega}|u|^{q}\exp(\alpha u^{2}), 
\end{eqnarray}
for all $u \in H^{1}_0(\Omega)$ and 
for all $q\geq 0$. 
Let us recall the following Trundiger-Moser inequality.
\begin{proposition}[\cite{Moser}]\label{resolvetudo}
If $\alpha>0$ and $u\in H^{1}_0(\Omega)$, then
$$
\dis\int_{\Omega}\exp\bigl(\alpha u^{2}\bigl)<\infty. 
$$
Moreover if $\alpha<4\pi$  there exists a constant
$C=C(\alpha)>0$ such that  
$$
\sup_{\|u\|\le1}\dis\int_{\Omega}\exp (\alpha u^{2} )\leq C. 
$$
\end{proposition}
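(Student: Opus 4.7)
I would prove the second (supremum) estimate first, since the pointwise integrability in the first statement follows from it by a truncation argument. For the supremum part, the natural strategy is Moser's combination of Schwarz symmetrization and a one-dimensional reduction. Given $u\in H^{1}_{0}(\Omega)$ with $\|u\|\leq 1$, I would replace $u$ by its symmetric decreasing rearrangement $u^{*}$ on the ball $\Omega^{*}$ of the same Lebesgue measure as $\Omega$. The Polya--Szego inequality gives $\|u^{*}\|\leq\|u\|\leq1$, and equimeasurability gives $\int_{\Omega}\exp(\alpha u^{2})=\int_{\Omega^{*}}\exp(\alpha(u^{*})^{2})$, so it suffices to bound the right-hand side. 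This reduces the problem to $\Omega=B_{R}$ with $u$ radial, nonincreasing, and vanishing on $\partial B_{R}$.

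In this radial setting I substitute $r=Re^{-t/2}$ and set $v(t):=u(r)$, so that $v(0)=0$, $v\geq0$, and a direct computation yields
$$\int_{B_{R}}|\nabla u|^{2}\,dx=4\pi\int_{0}^{\infty}|v'(t)|^{2}\,dt,\qquad \int_{B_{R}}\exp(\alpha u^{2})\,dx=\pi R^{2}\int_{0}^{\infty}\exp\bigl(\alpha v(t)^{2}-t\bigr)\,dt.$$
The constraint $\|u\|\leq 1$ then becomes $\int_{0}^{\infty}|v'|^{2}\,dt\leq 1/(4\pi)$, so Cauchy--Schwarz gives the pointwise bound $v(t)^{2}\leq t/(4\pi)$. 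Hence $\alpha v(t)^{2}-t\leq -(1-\alpha/(4\pi))t$, and the exponential integral is bounded by $4\pi/(4\pi-\alpha)$ whenever $\alpha<4\pi$. Multiplying by $\pi R^{2}=|\Omega|$ gives the supremum bound with constant $C=C(\alpha,|\Omega|)$.

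For the pointwise statement, I fix $u\in H^{1}_{0}(\Omega)$ and $\alpha>0$ and, for $M>0$, define the truncation $u_{M}:=(|u|-M)_{+}\operatorname{sgn}(u)$. By dominated convergence $\|u_{M}\|\to 0$ as $M\to\infty$, so one can choose $M$ with $2\alpha\|u_{M}\|^{2}<4\pi$. Using $u^{2}\leq 2M^{2}+2u_{M}^{2}$ one has $\exp(\alpha u^{2})\leq e^{2\alpha M^{2}}\exp(2\alpha u_{M}^{2})$, and applying the sup estimate just proved to $u_{M}/\|u_{M}\|$ with parameter $2\alpha\|u_{M}\|^{2}<4\pi$ yields the desired integrability.

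The main obstacle worth flagging is the subcritical gap: the clean estimate above works precisely because the statement is $\alpha<4\pi$ rather than the sharp endpoint $\alpha=4\pi$ proved in Moser's original paper. The endpoint case requires a much more delicate splitting of the integral on the set where $\alpha v(t)^{2}>t$, controlling both its measure and the positive contribution of the integrand. For the strictly subcritical range sufficient in this paper, however, the elementary Cauchy--Schwarz bound above is all that is needed.
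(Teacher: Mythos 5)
Your proof is correct, but note that there is nothing internal to compare it against: the paper gives no proof of this proposition at all, quoting it directly from Moser \cite{Moser} and using it as a black box. Your argument is the classical Moser reduction, and all the computations check: Schwarz symmetrization applied together with Polya--Szeg\H{o} reduces matters to a radial nonincreasing function on the ball of measure $|\Omega|$; the change of variable $r=Re^{-t/2}$ correctly yields $\int_{B_R}|\nabla u|^2=4\pi\int_0^\infty|v'|^2\,dt$ and $\int_{B_R}\exp(\alpha u^2)=\pi R^2\int_0^\infty e^{\alpha v(t)^2-t}\,dt$; Cauchy--Schwarz with $v(0)=0$ gives $v(t)^2\le t/(4\pi)$, so for $\alpha<4\pi$ the integral is at most $|\Omega|\cdot 4\pi/(4\pi-\alpha)$, an explicit constant $C(\alpha)$; and the truncation $u_M=(|u|-M)_+\,\mathrm{sgn}(u)$ with $u^2\le 2M^2+2u_M^2$ and $\|u_M\|\to0$ (dominated convergence, since $|\nabla u_M|^2=|\nabla u|^2\chi_{\{|u|>M\}}$) correctly deduces the unrestricted integrability for every $\alpha>0$ from the subcritical sup bound applied to $u_M/\|u_M\|$ with parameter $2\alpha\|u_M\|^2<4\pi$. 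Two cosmetic points you could tighten: rearrangement should formally be applied to $|u|$ (harmless, as $u^2=|u|^2$ and $\|\,|u|\,\|=\|u\|$), and the Polya--Szeg\H{o} step plus the absolute continuity of $v$ on half-lines deserve either a reference or a density argument via smooth functions. You are also right to flag the subcritical gap: the paper states only $\alpha<4\pi$, which is exactly what lets your elementary exponential bound close, whereas Moser's sharp endpoint $\alpha=4\pi$ (never needed in this paper, which always works with exponents like $2\alpha\rho_1^2<4\pi$ or $p(\alpha_0+1)\|u_n\|^2<4\pi$) requires the finer splitting you describe. In short, your route buys a short self-contained proof, with an explicit constant, of precisely the weaker statement the paper actually invokes by citation.
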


It is easy to see that the critical points of the smooth functional
\begin{equation*}\label{eq:F}
\mathcal J_{\varepsilon} (u,\phi) = \frac{1}{2}\|u\|^{2}+\frac{1}{2}\int_{\Omega} \phi u^{2} -\int_{\Omega} F(u) 
-\frac{1}{4}\int_{\Omega} |\nabla \phi|^{2} -\frac{\varepsilon^{4}}{8}\int_{\Omega} |\nabla \phi|^{4} 
\end{equation*}
on $H^{1}_0(\Omega)\times X $
are exactly the weak solutions of \eqref{eq:P}, according to \eqref{eq:ws1} and \eqref{eq:ws2}.
However since this functional $\mathcal J_{\varepsilon}$ is strongly indefinite in the product space
$H^{1}_0(\Omega)\times X $
we use a well known by now  reduction method
which consists in studying a suitable functional of a single variable.
We give the details in the next subsection, which basically consists in solving
for every $u\in H^{1}_{0}(\Omega)$ the second equation of the system and then
substituting in the first equation.

\medskip

The value of $\varepsilon>0$ has to be considered  fixed until the end of 
Section \ref{sec:finalsec}. In Section \ref{sec:finalissima}
where we will pas to the limit  with respect to $\varepsilon$ to prove Theorem \ref{th:2}.

\subsection{Study of the quasilinear Schr\"odinger-Poisson equation}

Let us study here the second equation of the system \eqref{eq:P}.
Note that, 
for  every $u\in H^{1}_{0}(\Omega)$ the map (with abuse of notations)
\begin{equation*}\label{eq:}
u^{2}: \phi \in X \longmapsto \int _{\Omega}\phi u^{2}\in \mathbb R
\end{equation*}
is linear and continuous, hence $u^{2}\in X'$. Then the unique solution of
\begin{equation}\label{eq:2eq}
\left\{
\begin{array}[c]{ll}
-\Delta \phi -\varepsilon^{4}\Delta_{4}\phi = u^{2}& \ \mbox{in }  \Omega, \medskip\\
\phi =0 & \ \mbox{in }  \Omega,\\
\end{array}
 \right.
\end{equation} is the unique critical point (the minimum) of the functional
$$\phi\in X\longmapsto \frac{1}{2}\int_{\Omega} |\nabla \phi|^{2} +\frac{\varepsilon^{4}}{4}\int_{\Omega} |\nabla \phi|^{4}-\int_{
\Omega} \phi u^{2}\in \mathbb R.$$
Hence it makes sense to consider the map
\begin{equation}\label{eq:phi}
\Phi_{\varepsilon}: u\in H^{1}_{0}(\Omega) \longmapsto \phi_{\varepsilon}(u) \in X
\end{equation}
where $\phi_{\varepsilon}(u)$ is the unique solution of \eqref{eq:2eq}.
The continuity of $\Phi_{\varepsilon}$ follows by the next result, whose proof is exactly as in
\cite[Lemma 1]{GG}.
\begin{lemma}\label{solutionoperator}
Let $g_{n}\to g$ in $X'$. Then, 
we have
$$\int_{\Omega} |\nabla \phi_{\varepsilon}(g_{n})|^{2} \to \int_{\Omega} |\nabla \phi_{\varepsilon}(g)|^{2}, \quad
 \int_{\Omega} |\nabla \phi_{\varepsilon}(g_{n})|^{4}\to \int_{\Omega} |\nabla \phi_{\varepsilon}(g)|^{4}.$$
In particular the operator $\Phi_{\varepsilon}$ in \eqref{eq:phi} is continuous and
$$\phi_{\varepsilon}(g_{n})\to \phi_{\varepsilon}(g) \text{ in } \  L^{\infty}(\Omega).$$

\end{lemma}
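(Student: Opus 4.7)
The plan is to exploit the strict monotonicity and coercivity of the quasilinear operator $-\Delta-\varepsilon^{4}\Delta_{4}$, so that both uniform bounds and strong convergence can be obtained by testing the equations with natural choices.

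First I would obtain a uniform bound on $\phi_{n}:=\phi_{\varepsilon}(g_{n})$ in $X$. Testing the equation $-\Delta\phi_{n}-\varepsilon^{4}\Delta_{4}\phi_{n}=g_{n}$ with $\phi_{n}$ itself gives
$$
|\nabla\phi_{n}|_{2}^{2}+\varepsilon^{4}|\nabla\phi_{n}|_{4}^{4}=\langle g_{n},\phi_{n}\rangle_{X',X}\leq \|g_{n}\|_{X'}\bigl(|\nabla\phi_{n}|_{2}+|\nabla\phi_{n}|_{4}\bigr),
$$
and since $\|g_{n}\|_{X'}$ is bounded (as $g_{n}\to g$ in $X'$), a standard Young-type argument yields $\|\phi_{n}\|_{X}\leq C$.

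Next, to pass to the limit I would compare $\phi_{n}$ directly with $\phi:=\phi_{\varepsilon}(g)$. Subtracting the two defining equations and testing with $\phi_{n}-\phi\in X$ produces
$$
|\nabla(\phi_{n}-\phi)|_{2}^{2}+\varepsilon^{4}\int_{\Omega}\bigl(|\nabla\phi_{n}|^{2}\nabla\phi_{n}-|\nabla\phi|^{2}\nabla\phi\bigr)\cdot\nabla(\phi_{n}-\phi)=\langle g_{n}-g,\phi_{n}-\phi\rangle_{X',X}.
$$
Using the well-known vector inequality for the $4$-Laplacian, namely $(|a|^{2}a-|b|^{2}b)\cdot(a-b)\geq c|a-b|^{4}$ for some $c>0$, the left-hand side is bounded below by $|\nabla(\phi_{n}-\phi)|_{2}^{2}+c\varepsilon^{4}|\nabla(\phi_{n}-\phi)|_{4}^{4}$. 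The right-hand side is at most $\|g_{n}-g\|_{X'}\|\phi_{n}-\phi\|_{X}$, which tends to $0$ because $\|\phi_{n}-\phi\|_{X}$ is bounded and $\|g_{n}-g\|_{X'}\to 0$. Consequently $|\nabla(\phi_{n}-\phi)|_{2}\to 0$ and $|\nabla(\phi_{n}-\phi)|_{4}\to 0$, i.e. $\phi_{n}\to\phi$ strongly in $X$.

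Strong convergence in $X$ immediately gives both limits $\int_{\Omega}|\nabla\phi_{n}|^{2}\to\int_{\Omega}|\nabla\phi|^{2}$ and $\int_{\Omega}|\nabla\phi_{n}|^{4}\to\int_{\Omega}|\nabla\phi|^{4}$, and continuity of $\Phi_{\varepsilon}$ follows. Finally, since in dimension two $X\hookrightarrow L^{\infty}(\Omega)$ (as $W^{1,4}_{0}(\Omega)\hookrightarrow L^{\infty}(\Omega)$), the convergence in $X$ upgrades to convergence in $L^{\infty}(\Omega)$.

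The main technical point, and the only step where anything nontrivial enters, is the application of the strong monotonicity inequality for the $4$-Laplacian; the rest is bookkeeping with the Sobolev embeddings. Because the reduction $u\mapsto\phi_{\varepsilon}(u)$ is defined via a strictly convex, coercive functional on $X$, there is no compactness obstruction and the limit procedure is entirely quantitative.
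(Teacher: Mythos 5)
Your proof is correct, and it is worth noting that for this lemma the paper contains no argument at all: it simply states that the proof is ``exactly as in \cite[Lemma 1]{GG}'', i.e.\ it is delegated to the companion paper. The argument delegated there runs along the standard variational lines for such reduction maps: boundedness of $\{\phi_{\varepsilon}(g_{n})\}$ in $X$, extraction of a weak limit, identification of that limit with $\phi_{\varepsilon}(g)$ through the weak formulation, and recovery of the norm convergences by passing to the limit in $\int_{\Omega}|\nabla \phi_{\varepsilon}(g_{n})|^{2}+\varepsilon^{4}\int_{\Omega}|\nabla \phi_{\varepsilon}(g_{n})|^{4}=\langle g_{n},\phi_{\varepsilon}(g_{n})\rangle$ combined with weak lower semicontinuity of each term separately. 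Your route is genuinely different and, if anything, yields more: subtracting the two Euler--Lagrange equations, testing with $\phi_{\varepsilon}(g_{n})-\phi_{\varepsilon}(g)$, and invoking the Simon-type strong monotonicity inequality $(|a|^{2}a-|b|^{2}b)\cdot(a-b)\geq c\,|a-b|^{4}$ (valid for the $p$-Laplacian with $p=4\geq 2$) gives strong convergence in $X$ directly, with no compactness or subsequence extraction, and even an explicit modulus of continuity for $\Phi_{\varepsilon}$: from $|\nabla(\phi_{n}-\phi)|_{2}^{2}+c\varepsilon^{4}|\nabla(\phi_{n}-\phi)|_{4}^{4}\leq \|g_{n}-g\|_{X'}\,\|\phi_{n}-\phi\|_{X}$ and the a priori bound one reads off $|\nabla(\phi_{n}-\phi)|_{2}\lesssim \|g_{n}-g\|_{X'}^{1/2}$ and $|\nabla(\phi_{n}-\phi)|_{4}\lesssim \varepsilon^{-1}\|g_{n}-g\|_{X'}^{1/4}$. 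All the auxiliary steps check out: the coercivity bound $\|\phi_{\varepsilon}(g_{n})\|_{X}\leq C$ follows because the left-hand side of the energy identity grows superlinearly in $\|\phi_{n}\|_{X}$ while the right-hand side is linear; the two displayed limits of the lemma are immediate consequences of strong $X$-convergence; and the $L^{\infty}$ conclusion follows from $W^{1,4}_{0}(\Omega)\hookrightarrow L^{\infty}(\Omega)$ in dimension two, which the paper records as $X\hookrightarrow L^{\infty}(\Omega)$. In short: a correct, self-contained, quantitative proof, strictly stronger than the statement and more direct than the compactness-based argument the paper points to.
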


\medskip

 In the remaining of the paper, $\phi_{\varepsilon}(u)$ will always denote the unique solution of \eqref{eq:2eq}
 with fixed $u$. Note that it
satisfies
\begin{equation}\label{eq:sostituicao}
\int_{\Omega} |\nabla \phi_{\varepsilon}(u)|^{2} + \varepsilon^{4}\int_{\Omega}| \nabla\phi_{\varepsilon}(u)|^{4}
 = \int_{\Omega} \phi_{\varepsilon}(u) u^{2}.
\end{equation}
The next result will be usefull in the following. 
\begin{lemma}\label{lem:facile}
Let $q\in[1,+\infty)$. If $\{u_{n}\}$ converges to some $w$ in $L^{q}(\Omega)$ 
 then, \medskip
 \begin{itemize}
 \item[(a)] $\displaystyle \lim_{n\to+\infty}\int_{\Omega} |\nabla \phi_{\varepsilon}(u_{n})|^{2} = \int_{\Omega} |\nabla \phi_{\varepsilon}(w)|^{2}$, \medskip
\item[(b)] $ \displaystyle\lim_{n\to+\infty}\int_{\Omega} |\nabla \phi_{\varepsilon}(u_{n})|^{4} =\int_{\Omega} |\nabla \phi_{\varepsilon}(w)|^{4}$, \medskip
\item[(c)]$ \displaystyle\lim_{n\to+\infty}\int_{\Omega}\phi_{\varepsilon}(u_{n})u_{n}^{2} = \int_{\Omega}\phi_{\varepsilon}(w) w^{2} $, \medskip
\item[(d)]$ \displaystyle\lim_{n\to+\infty}\phi_{\varepsilon}(u_{n}) =\phi_{\varepsilon}(w) $ in $L^{\infty}(\Omega)$,\medskip
\item[(e)] for all $v\in H^{1}_{0}(\Omega): \displaystyle\lim_{n\to+\infty}\int_{\Omega}\phi_{\varepsilon}(u_{n})u_{n}v = \int_{\Omega}\phi_{\varepsilon}(w) wv $.
 \end{itemize}
\end{lemma}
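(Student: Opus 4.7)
The plan is to reduce Lemma \ref{lem:facile} to the already-established Lemma \ref{solutionoperator} by showing that strong $L^q$-convergence $u_n\to w$ forces $u_n^2\to w^2$ in $X'$; once this is in hand, items (a), (b) and (d) follow at once upon applying Lemma \ref{solutionoperator} with $g_n=u_n^2$ and $g=w^2$.

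To establish $u_n^2\to w^2$ in $X'$, I would use the continuous embedding $X\hookrightarrow L^\infty(\Omega)$, which dualises to $L^1(\Omega)\hookrightarrow X'$. Hence it is enough to prove $u_n^2\to w^2$ in $L^1(\Omega)$, and this follows from the H\"older estimate
$$
|u_n^2-w^2|_1 \le |u_n-w|_q\,|u_n+w|_{q'}
$$
with $q'$ the conjugate exponent of $q$, together with the boundedness of $\{u_n+w\}$ in $L^q(\Omega)$ and the inclusion of Lebesgue spaces on the bounded domain $\Omega$. When $q\ge 2$ this is immediate; for smaller $q$ the argument needs the a priori $L^2$-boundedness of $\{u_n\}$, which in every intended application will follow from a uniform $H^1_0$-bound.

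For (c) and (e) I would use the standard splitting
$$
\int_\Omega\phi_\varepsilon(u_n)u_n^2 - \int_\Omega\phi_\varepsilon(w)w^2 = \int_\Omega\bigl(\phi_\varepsilon(u_n)-\phi_\varepsilon(w)\bigr)u_n^2 + \int_\Omega \phi_\varepsilon(w)(u_n^2 - w^2),
$$
together with its analogue using $u_n v$ in place of $u_n^2$ for (e). In each case the first summand tends to zero thanks to (d), since $\phi_\varepsilon(u_n)-\phi_\varepsilon(w)\to 0$ uniformly while the companion factor stays bounded in $L^1(\Omega)$ (for (e) one invokes $v\in L^{q'}(\Omega)$ via Sobolev embedding); the second summand tends to zero because $\phi_\varepsilon(w)\in L^\infty(\Omega)$ and the relevant convergence of $u_n^2$, respectively $u_n$, has already been secured. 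The main technical hurdle is precisely the opening step of upgrading $L^q$-convergence of $u_n$ to $L^1$-convergence of $u_n^2$; after that, everything else is routine book-keeping with H\"older's inequality and the continuity of the solution operator $\Phi_\varepsilon$ supplied by Lemma \ref{solutionoperator}.
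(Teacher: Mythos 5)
Your proposal is correct and takes essentially the same route as the paper: both establish $u_n^2\to w^2$ in $X'$ by a duality estimate (the paper bounds $\sup_{\|\phi\|_X=1}\bigl|\int_\Omega \phi\,(u_n^2-w^2)\bigr|$ by $C|u_n^2-w^2|_q$ using $X\hookrightarrow L^{q'}(\Omega)$, while you pass through $L^1(\Omega)\hookrightarrow X'$ via $X\hookrightarrow L^\infty(\Omega)$), then invoke Lemma \ref{solutionoperator} for (a), (b), (d) and treat (e) by H\"older's inequality together with (d). The only cosmetic difference is that the paper reads off (c) directly from (a), (b) and the identity \eqref{eq:sostituicao} rather than your splitting, and your explicit caveat for $q<2$ is if anything more careful than the paper's unjustified step $|u_n^2-w^2|_q\to 0$, which likewise tacitly relies on the stronger integrability available in every application of the lemma.
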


\begin{proof}
Under our assumptions we have,
$$\|u_{n}^{2} - w^{2}\| = \sup_{\|\phi\|_{X}=1} \Big| \int_{\Omega} \phi (u_{n}^{2} - w^{2})\Big|\leq |\phi|_{q'} |u_{n}^{2} - w^{2}|_{q}\leq C |u_{n}^{2} - w^{2}|_{q}\to 0.$$
Then we can apply Lemma \ref{solutionoperator} and \eqref{eq:sostituicao} and  conclude the proof
of $(a),(b),(c),(d)$. The proof of $(e)$ follows by using an H\"older inequality and $(d)$.
\end{proof}

Let $G(\Phi_{\varepsilon})$ be the graph of the map $\Phi_{\varepsilon}: u\in H^{1}_0(\Omega)\mapsto \phi_{\varepsilon}(u)\in X$.

Since the functional  $\mathcal J_{\varepsilon}$ is $C^{2}$, classical arguments using  
the Implicit Function Theorem (see e.g. \cite{BF} for  the Schr\"odinger-Poisson system)
give that
$$G(\Phi_{\varepsilon})=\left\{(u,\phi)\in H^{1}_0(\Omega)\times X: \partial_{\phi}\mathcal J_{\varepsilon}(u,\phi) = 0\right\}.$$
and actually  $\Phi_{\varepsilon}\in C^{1}\Big(H^{1}_0(\Omega); X \Big).$

As a consequence, the functional (recall \eqref{eq:sostituicao})
$$
J_{\varepsilon}(u):=\mathcal J_{\varepsilon}(u,\Phi_{\varepsilon}(u)) =
\frac 12 \|u\|^{2}+ \frac 14 \displaystyle\int_{\Omega}|\nabla \phi_{\varepsilon}(u)|^{2}  + \frac{3\varepsilon^{4}}{8} \displaystyle\int_{\Omega}|\nabla \phi_{\varepsilon}(u)|^{4}  
-\displaystyle\int_{\Omega}F(u) 
$$
is of class $C^{1}$ and in particular we have
\begin{eqnarray*}\label{eq:}
J_{\varepsilon}'(u)[v] &= &\partial_{u} \mathcal J_{\varepsilon}(u,\phi_{\varepsilon}(u))[v]+
\partial_{\phi}\mathcal J_{\varepsilon}(u,\phi_{\varepsilon}(u))\circ \Phi_{\varepsilon}'(u) [v] \\ 
&=&\partial_{u} \mathcal J_{\varepsilon}(u,\phi_{\varepsilon}(u))[v].
\end{eqnarray*}
Then
$$J_{\varepsilon}'(u)[v]  = \int_{\Omega} \nabla u \nabla v+\int_{\Omega}\phi_{\varepsilon}(u)uv
-\int_{\Omega} f(u)v $$
which shows that finding a critical point $u_{\varepsilon}$ of $J_{\varepsilon}$ is equivalent to obtain a solution
$(u_{\varepsilon}, \phi_{\varepsilon})$ of \eqref{eq:P} where $\phi_{\varepsilon}=\Phi_{\varepsilon}(u_{\varepsilon})$.
We are then reduced to study the problem
\begin{equation*}
\begin{cases}
- \Delta u +\phi_{\varepsilon}(u) u =  f(u) &  \mbox{in }  \Omega\\
 u=0 & \mbox{on }  \partial\Omega.
\end{cases}
\end{equation*}

For brevity we introduce the functional
\begin{equation*}\label{eq:I}
I_{\varepsilon}:u\in H^{1}_0(\Omega)\longmapsto \dis\frac{1}{4}\int_{\Omega}|\nabla \phi_{\varepsilon}(u)|^{2} +  
\frac{3\varepsilon^{4}}{8}\int_{\Omega} |\nabla \phi_{\varepsilon}(u)|^4
\in \mathbb R
\end{equation*}
so that we can write
$$J_{\varepsilon}(u) = \frac{1}{2}\|u\|^{2} + I_{\varepsilon}(u) -\int_{\Omega} F(u)
.$$

\section{The truncated functional}\label{sec:trunc}

In order to overcome  the ``growth'' of order $4$ in $I_{\varepsilon}$, let us define a truncation for the functional $J_{\varepsilon}$
 in the following way. Consider a smooth cut-off  
 and non-increasing function $\psi:[0,+\infty)\to \mathbb [0,+\infty)$ such that
$$
\left\{\begin{array}{lll} 
\psi(t) = 1, & t\in[0,1],\smallskip\\
0 \leq \psi(t) \leq 1, & t\in(1,2),\smallskip\\
\psi(t) = 0, &t\in[2,\infty),\smallskip\\
|\psi'|_{\infty}\leq 2.
\end{array}\right.
$$
We define  $h_T(u) := \psi\left({\|u\|}^2/{T^2}\right)$ and the  truncated functional 
$J_{\varepsilon}^{T}:H^{1}_0(\Omega)\rightarrow\mathbb{R}$ given by 
\begin{eqnarray*}
J_{\varepsilon}^{T}(u) &:= &\frac 12 \|u\|^{2}+ 
h_{T}(u)\biggl[\frac 14 \int_{\Omega}|\nabla \phi_{\varepsilon}(u)|^{2}  
+ \frac{3\varepsilon^{4}}{8} \int_{\Omega}|\nabla \phi_{\varepsilon}(u)|^{4}
 \biggl] 
- \displaystyle\int_{\Omega}F(u)\\
&=&\frac 12 \|u\|^{2}+  h_{T}(u)I_{\varepsilon}(u)- \displaystyle\int_{\Omega}F(u) 
.
\end{eqnarray*}
The functional $J_{\varepsilon}^{T}$ is $C^{1}$ with differential given, for all $u,v \in H^{1}_0(\Omega)$, by
\begin{equation}\label{eq:derivada}
(J_{\varepsilon}^{T})'(u)[v]=\langle u, v\rangle
+\frac{2}{T^{2}}\psi'\left(\frac{\|u\|^{2}}{T^{2}}\right)\langle u, v\rangle I_{\varepsilon}(u)
 +h_{T}(u)\int_{\Omega} \phi_{\varepsilon}(u) u v 
-\int_{\Omega} f(u)v.
\end{equation}


\subsection{The Mountain Pass Geometry for $J_{\varepsilon}^{T}$}

The next two results deal with the Mountain Pass geometry for the functional $J^{T}_{\varepsilon}$
where $\varepsilon, T>0$.

 We point out  that the Mountain Pass structure
of $J_{\varepsilon}^{T}$ does not depend on $\varepsilon$.
In other words, 
\begin{itemize}
\item $\beta, \rho$ in Lemma \ref{geometria1} 
does not depend on $\varepsilon$, neither on $T.$
\item $e_{T}$ in Lemma \ref{segundageometria1}
just depend on $T$, and not on $\varepsilon$.
\end{itemize}
The reason of this independence on $\varepsilon$  is because the terms 
involving $\varepsilon$ (that is $I_{\varepsilon}$) is suitably thrown away.

\begin{lemma}\label{geometria1}
Assume that conditions  \eqref{f_{1}} and \eqref{f_{2}} hold.
Then,  there exists
 numbers $\rho, \beta>0 $ such that,
$$
\forall T>0, \quad 
J_{\varepsilon}^{T}(u)\geq \beta,\quad  \text{whenever }\
\|u\|=\rho.
$$
\end{lemma}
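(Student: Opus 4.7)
The plan is to exploit the fact that the truncation term $h_T(u)I_\varepsilon(u)$ is nonnegative (since $h_T \geq 0$ and $I_\varepsilon \geq 0$ by definition), so it can simply be discarded in a lower bound. This immediately yields
$$J_\varepsilon^T(u) \;\geq\; \frac{1}{2}\|u\|^2 - \int_\Omega F(u),$$
and the entire estimate reduces to controlling $\int_\Omega F(u)$ by powers of $\|u\|$ using the Trudinger--Moser inequality. This also makes transparent why $\rho$ and $\beta$ depend on neither $\varepsilon$ nor $T$.

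For the upper bound on $\int_\Omega F(u)$, I would apply inequality \eqref{iff1} with a small $\delta>0$, some $\alpha>\alpha_0$ to be fixed, and some $q>2$ (free to choose):
$$\int_\Omega F(u) \leq \delta |u|_2^2 + \widetilde C_\delta \int_\Omega |u|^q \exp(\alpha u^2).$$
Next I would use H\"older's inequality with conjugate exponents $2,2$ to split the second integral as
$$\int_\Omega |u|^q \exp(\alpha u^2) \leq |u|_{2q}^{q}\left(\int_\Omega \exp(2\alpha u^2)\right)^{1/2}.$$
Writing $u = \|u\|\, \widehat u$ with $\|\widehat u\|=1$, I have $\exp(2\alpha u^2) = \exp(2\alpha \|u\|^2 \widehat u^2)$. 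By Proposition \ref{resolvetudo}, if $\|u\|$ is small enough that $2\alpha \|u\|^2 < 4\pi$, this integral is bounded by a constant $C(\alpha)$ depending only on $\alpha$. Concretely, I would pick $\alpha>\alpha_0$ first and then require $\rho^2 < 2\pi/\alpha$.

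Putting it together with the Sobolev embedding $H^1_0(\Omega) \hookrightarrow L^2(\Omega), L^{2q}(\Omega)$, we get
$$\int_\Omega F(u) \leq C_1 \delta \|u\|^2 + C_2 \|u\|^q,$$
so that for $\|u\|=\rho$,
$$J_\varepsilon^T(u) \geq \left(\tfrac{1}{2} - C_1\delta\right)\rho^2 - C_2 \rho^q.$$
Fixing $\delta$ small so that $C_1\delta < 1/4$, and then fixing $\rho>0$ sufficiently small (compatibly with $\rho^2 < 2\pi/\alpha$, and small enough that $C_2 \rho^{q-2} < 1/8$), we conclude $J_\varepsilon^T(u) \geq \beta := \rho^2/8$, uniformly in $\varepsilon$ and $T$.

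I do not anticipate any serious obstacle: the argument is standard once one observes that the truncation term is nonnegative. The only minor care needed is the order in which the parameters are chosen (first $\alpha$, $\delta$, $q$; then $\rho$) to make the Trudinger--Moser bound applicable and the resulting polynomial-in-$\rho$ bound positive.
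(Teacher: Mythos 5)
Your proof is correct and follows essentially the same route as the paper: both discard the nonnegative truncation term $h_T(u)I_\varepsilon(u)$, bound $\int_\Omega F(u)$ via \eqref{iff1} with $q>2$, split the exponential term by H\"older's inequality with exponents $(2,2)$, and control $\int_\Omega \exp(2\alpha u^2)$ by Proposition \ref{resolvetudo} after choosing $\rho$ small enough that $2\alpha\rho^2 < 4\pi$. Your write-up is in fact slightly more explicit than the paper's about the order in which $\alpha$, $\delta$, $q$, and $\rho$ are fixed, which is a welcome clarification rather than a deviation.
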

\begin{proof}
Let  $\alpha > \alpha_0$ and use  \eqref{iff1} with $q>2$: 
 taking $\delta>0$ sufficiently small there exists $D_1>0$ such that
\begin{eqnarray*}
J_{\varepsilon}^{T}(u)\geq  D_1\|u\|^2 - \widetilde{C}_\delta\displaystyle\int_{\Omega}|u|^{q}\exp( \alpha  u^{2}). 
\end{eqnarray*}
Using H\"older's inequality 
\begin{eqnarray*}
J_{\varepsilon}^{T}(u)&\geq&  D_1\|u\|^2 - \widetilde{C}_\delta\biggl(\displaystyle\int_{\Omega} |u|^{2q }\biggl)^{1/2}
\biggl(\int_{\Omega} \exp\left( 2\alpha  \|  u\|^{2}\frac{u^{2}}{\|u\|^{2}}\right ) \biggl)^{1/2}\\
&\geq &D_1\|u\|^2 - D_{2}\|u\|^{q}\biggl(\int_{\Omega} \exp\left( 2\alpha  \|  u\|^{2}\frac{u^{2}}{\|u\|^{2}}\right ) \biggl)^{1/2}.
\end{eqnarray*}
Then we can choose $\rho_1=\|u\|>0$  small enough
such that $2\alpha \rho^{2}_{1}< 4\pi$,  so that, by Proposition \ref{resolvetudo} we get 
\begin{eqnarray*}
J_{\varepsilon}^{T}(u)\geq D_1  \rho^{2}_{1} - D_2 \rho^{q}_{1},
\end{eqnarray*}
for some $D_2>0$. Thus there exists $\beta>0$ such that $J^{T}(u)\geq \beta >0$, for all $0<\rho<\rho_1$ which proves the Lemma.
\end{proof}

\begin{lemma}\label{segundageometria1}
Assume that conditions \eqref{f_{4}} hold. 
Then for every $T>0$, there exists $e_{T} \in H^{1}_{0}(\Omega)$ such that
$$ \quad J_{\varepsilon}^{T}(e_{T})<0\quad \text{and } \quad \| e_{T}\| >\rho,$$
where $\rho$ is given in Lemma \ref{geometria1}.
\end{lemma}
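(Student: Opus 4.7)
The plan is to show that $J_{\varepsilon}^{T}$ tends to $-\infty$ along a ray generated by a fixed nonnegative test function. The truncation is designed precisely so that the awkward fourth-order term $I_{\varepsilon}$ is switched off for large $\|u\|$, leaving only the competition between the kinetic term $\tfrac12\|u\|^{2}$ and the nonlinear term $\int_{\Omega}F(u)$.

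First, I would fix any nonzero, nonnegative function $w\in H^{1}_{0}(\Omega)$; for concreteness take $w=\mathfrak u$, the ground state of \eqref{eq:A} (which may be chosen nonnegative since $R(|u|)=R(u)$). For $t>0$ consider $u_{t}:=tw$. Since $\|u_{t}\|^{2}=t^{2}\|w\|^{2}$, as soon as $t\ge T\sqrt{2}/\|w\|$ we have $\|u_{t}\|^{2}/T^{2}\ge 2$, and the definition of $\psi$ gives $h_{T}(u_{t})=\psi(\|u_{t}\|^{2}/T^{2})=0$. Consequently the whole contribution of $I_{\varepsilon}$ disappears, and
$$J_{\varepsilon}^{T}(u_{t})=\frac{1}{2}t^{2}\|w\|^{2}-\int_{\Omega}F(tw).$$

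Next, I would use assumptions \eqref{f_{0}} and \eqref{f_{4}}: since $f(s)\ge \tau s^{r-1}$ for $s\ge 0$ (and $f\equiv 0$ on $(-\infty,0]$), integration yields $F(s)\ge \tfrac{\tau}{r}s^{r}$ for $s\ge 0$. Because $w\ge 0$ and $t>0$,
$$\int_{\Omega} F(tw)\;\ge\; \frac{\tau\, t^{r}}{r}\int_{\Omega} w^{r}\;=\;\frac{\tau\, t^{r}}{r}|w|_{r}^{r}.$$
Putting these together, for $t\ge T\sqrt{2}/\|w\|$,
$$J_{\varepsilon}^{T}(u_{t})\;\le\;\frac{1}{2}t^{2}\|w\|^{2}-\frac{\tau\, t^{r}}{r}|w|_{r}^{r}.$$

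Since $r>2$, the right-hand side tends to $-\infty$ as $t\to+\infty$; moreover $\|u_{t}\|=t\|w\|\to+\infty$, so in particular $\|u_{t}\|>\rho$ eventually. Choosing any $t$ large enough that both inequalities hold and setting $e_{T}:=u_{t}$ yields the claim. There is no serious obstacle here: the truncation is tailor-made to kill the nonstandard $I_{\varepsilon}$ contribution on the region of interest, and hypothesis \eqref{f_{4}} supplies a genuine superquadratic lower bound on $F$ sufficient to defeat the quadratic term.
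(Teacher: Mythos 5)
Your proof is correct and takes essentially the same route as the paper: scale a fixed nonnegative function until the truncation $h_{T}$ vanishes, use \eqref{f_{4}} (integrated, via \eqref{f_{0}}) to bound $F(s)\geq \frac{\tau}{r}s^{r}$ from below, and let $r>2$ drive $J_{\varepsilon}^{T}$ to $-\infty$ along the ray. The only cosmetic differences are your choice of $w=\mathfrak u$ with threshold $t\geq \sqrt{2}\,T/\|w\|$ versus the paper's normalized positive $v\in C^{\infty}_{0}(\Omega)$ with $t>2T$.
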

\begin{proof}
Let $T>0$ be fixed. Let now $v\in C^{\infty}_{0}(\Omega)$, positive, with $\|v\|=1$.
Using \eqref{f_{4}} and considering $t>2T$,  we get
$$
J_{\varepsilon}^{T}(tv)\leq \frac 12
t^{2}-\tau
\frac{t^{r}}{r}\int_{\Omega} v^{r}$$
Since $2< r$,  the result follows by choosing some 
$t_{*}>2T$ large enough and setting
$e_{T}:=t_{*}v$.
\end{proof}

\medskip

%

\section{Proof of Theorem \ref{teorema1}}\label{sec:finalsec}

Since for every $\varepsilon,T>0$ 
 the functional $J_{\varepsilon}^{T}$ satisfies the geometric assumptions of 
Mountain Pass Theorem  (see \cite{Ambrosetti}), we know that 
there exists a $(PS)$ sequence at this level, that is a sequence $\{u_{n}\}\subset
H^{1}_0(\Omega)$  satisfying
$$
J_{\varepsilon}^{T}(u_{n})\rightarrow c_{\varepsilon}^{T}>0 \ \ \mbox{and} \ \
(J_{\varepsilon}^{T})'(u_{n})\rightarrow 0,
$$
where
$$
c_{\varepsilon}^{T} := \dis\inf_{\gamma \in \Gamma_{\varepsilon}^{T}} \dis\max_{t \in [0,1]}
J_{\varepsilon}^{T}(\gamma(t))>0
$$
and
$$
\Gamma_{\varepsilon}^{T} := \left\{ \gamma \in C\left([0,1],H^{1}_0(\Omega)\right) : \gamma(0)=0,
\ J_{\varepsilon}^{T}(\gamma(1)) < 0\right\}.
$$
It is clear that this sequence $\{u_{n}\}$ should depend also on $\varepsilon$ and $T$
but we omit this for simplicity.

Observe that there exists $k>0$ such that $0< k\leq c_{\varepsilon}^{T}$
for all $\varepsilon, T$, by Lemma \ref{geometria1}.
Moreover  since $e_{T}$ found in Lemma \ref{segundageometria1}
 does not depends on  $\varepsilon$,  by setting
\begin{equation*}
\gamma_{*}: t\in[0,1]\mapsto t e_{T}\in H_{0}^{1}(\Omega)
\end{equation*}
we get 
$\gamma_{*}\in\bigcap_{\varepsilon>0}\Gamma_{\varepsilon}^{T}$.

\medskip

Our next aim is to show that for a suitable choice of $T>0$
(see Lemma \ref{estimateonc}) the $(PS)$  sequence $\{u_{n}\}$ given above
for $J_{\varepsilon}^{T}$ at level $c_{\varepsilon}^{T}$
is bounded   and is actually a  $(PS)$ sequence for the untruncated functional $J_{\varepsilon}$
(see Lemma \ref{estimatetominimizingsequence}).

First few preliminaries are in order.
It is well-known that, for every $\varepsilon,T>0$
 there is a unique $\mathfrak t_{\varepsilon,T}>0$ such that $c_{\varepsilon}^{T}\leq J_{\varepsilon}^{T}( \mathfrak t_{\varepsilon,T} \mathfrak u)$.
 Recall that $\mathfrak u$ is the ground state of the auxiliary problem \eqref{eq:A}.

The important fact now is that there is a  bound on $\mathfrak t_{\varepsilon,T}$ independent on $T$.
\begin{lemma}\label{parametrolimitado}
There exists $K_{\varepsilon}>0$, such that for every $T>0$
$$
\mathfrak t_{\varepsilon,T}\leq K_{\varepsilon}.
$$
\end{lemma}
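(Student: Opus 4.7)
The plan is to exhibit a constant $K_\varepsilon$, independent of $T$, beyond which $J_\varepsilon^T(t\mathfrak u) < \beta$, where $\beta > 0$ is the universal Mountain Pass floor from Lemma \ref{geometria1}. Since by hypothesis $J_\varepsilon^T(\mathfrak t_{\varepsilon,T}\mathfrak u) \geq c_\varepsilon^T \geq \beta$, this forces $\mathfrak t_{\varepsilon,T} \leq K_\varepsilon$. So the task reduces to producing an upper bound for $t \mapsto J_\varepsilon^T(t\mathfrak u)$ that diverges to $-\infty$ as $t \to +\infty$ at a rate uniform in $T$.

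The first step is a uniform estimate $I_\varepsilon(u) \leq C \|u\|^4$, with $C$ independent of $\varepsilon$. Starting from the identity \eqref{eq:sostituicao}, H\"older's inequality together with the 2D Sobolev embedding $H^1_0(\Omega) \hookrightarrow L^p(\Omega)$ (valid for every $p \geq 1$) yields
\begin{equation*}
\int_\Omega |\nabla \phi_\varepsilon(u)|^2 + \varepsilon^4 \int_\Omega |\nabla \phi_\varepsilon(u)|^4 \;=\; \int_\Omega \phi_\varepsilon(u)\, u^2 \;\leq\; C_1\, \|\phi_\varepsilon(u)\|\, \|u\|^2.
\end{equation*}
Dropping the quartic term on the left gives $\|\phi_\varepsilon(u)\| \leq C_1 \|u\|^2$, and reinjecting provides $\varepsilon^4 \int|\nabla\phi_\varepsilon(u)|^4 \leq C_1^{2}\|u\|^4$. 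Both pieces of $I_\varepsilon(u)$ are thereby controlled by a multiple of $\|u\|^4$, and since $h_T \in [0,1]$, the same bound propagates to $h_T(t\mathfrak u)\, I_\varepsilon(t\mathfrak u) \leq C\, t^4 \|\mathfrak u\|^4$.

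Next I exploit condition \eqref{f_{3}}. The standard monotonicity of $t \mapsto t^{-\theta} F(t)$, together with $F(t_0) > 0$ for some $t_0 > 0$, produces a constant $\kappa > 0$ such that $F(s) \geq \kappa s^\theta - \kappa t_0^\theta$ for every $s \geq 0$. Since $\mathfrak u \geq 0$, we obtain
\begin{equation*}
J_\varepsilon^T(t\mathfrak u) \;\leq\; \frac{t^2}{2}\|\mathfrak u\|^2 \;+\; C\, t^4 \|\mathfrak u\|^4 \;-\; \kappa\, t^\theta\, |\mathfrak u|_\theta^\theta \;+\; \kappa\, t_0^\theta\, |\Omega|.
\end{equation*}
Because $\theta > 4$, the right-hand side tends to $-\infty$ as $t \to +\infty$, so a threshold $K_\varepsilon$ as required does exist.

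The main obstacle is the first step: controlling $I_\varepsilon$ despite the $\varepsilon^4$-weighted quartic term. The crux is that the $\varepsilon^4$ appearing inside $I_\varepsilon$ is exactly the $\varepsilon^4$ multiplying $\int |\nabla\phi_\varepsilon(u)|^4$ in \eqref{eq:sostituicao}, so the weight is automatically absorbed when one solves back for the quartic integral, and no amplification in $\varepsilon$ survives. A secondary point worth emphasizing is that the super-quartic lower bound on $F$ crucial in the last display arises from $\theta > 4$ in \eqref{f_{3}}, rather than from the merely super-quadratic bound supplied by \eqref{f_{4}}.
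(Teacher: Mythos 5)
Your proof is correct, but it takes a genuinely different route from the paper's. The paper argues through the first-order identity at the maximum point: from $(J_{\varepsilon}^{T})'(\mathfrak t_{\varepsilon,T}\mathfrak u)[\mathfrak t_{\varepsilon,T}\mathfrak u]=0$ and $\psi'\le 0$ it obtains, via \eqref{f_{4}} and the ground-state identity $\int_{\Omega}|\mathfrak u|^{r}=\|\mathfrak u\|^{2}$, the inequality $\mathfrak t_{\varepsilon,T}^{r}\|\mathfrak u\|^{2}\le \mathfrak t_{\varepsilon,T}^{2}\|\mathfrak u\|^{2}+h_{T}(\mathfrak t_{\varepsilon,T}\mathfrak u)\,I_{\varepsilon}'(\mathfrak t_{\varepsilon,T}\mathfrak u)$, and then rules out $\mathfrak t_{\varepsilon,T}\to+\infty$ using $r>2$. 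You instead argue at the level of energy values: the a priori bound $I_{\varepsilon}(u)\le C\|u\|^{4}$, correctly extracted from \eqref{eq:sostituicao} (the $\varepsilon^{4}$ weights do match exactly, as you emphasize, so the constant is even $\varepsilon$-independent), combined with the Ambrosetti--Rabinowitz lower bound $F(s)\ge\kappa s^{\theta}-\kappa t_{0}^{\theta}$ from \eqref{f_{3}} and $\theta>4$, forces $J_{\varepsilon}^{T}(t\mathfrak u)\to-\infty$ at a rate independent of $T$ and $\varepsilon$, while the defining property $J_{\varepsilon}^{T}(\mathfrak t_{\varepsilon,T}\mathfrak u)\ge c_{\varepsilon}^{T}\ge\beta>0$ (the uniform floor the paper itself records, via Lemma \ref{geometria1}, right after introducing $c_{\varepsilon}^{T}$) caps $\mathfrak t_{\varepsilon,T}$. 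Comparing the two: the paper's proof is shorter and exploits \eqref{f_{4}} directly, but its final contradiction tacitly discards the truncated nonlocal term, which is a delicate step --- if $\mathfrak t_{\varepsilon,T}$ diverged more slowly than $T$ one would have $h_{T}(\mathfrak t_{\varepsilon,T}\mathfrak u)=1$ with a nonlocal term of size up to $C\mathfrak t_{\varepsilon,T}^{4}$, which is not dominated by $\mathfrak t_{\varepsilon,T}^{r}$ when $2<r\le 4$ --- whereas your use of $\theta>4$ controls precisely this quartic growth head-on, and dispenses with \eqref{f_{4}} altogether (only \eqref{f_{0}}--\eqref{f_{3}} enter). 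Your argument moreover yields a bound $K$ independent of $\varepsilon$, which would even reinforce the observation in Section \ref{sec:finalissima} that $\overline T(\varepsilon)\not\to 0$. The one tacit assumption, shared with the paper's own computations in Lemmas \ref{segundageometria1} and \ref{estimateonc}, is that $\mathfrak u\ge 0$, which is legitimate since the ground state of \eqref{eq:A} may be chosen nonnegative.
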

\begin{proof}
Since $(J^{T}_{\varepsilon})'(\mathfrak t_{\varepsilon,T} \mathfrak u )[\mathfrak t_{\varepsilon,T}\mathfrak u] = 0$ and $h'(\mathfrak t_{\varepsilon,T}\mathfrak u) \leq 0$, by \eqref{eq:derivada} we easily get
$$\int_{\Omega} f(\mathfrak t_{\varepsilon,T}\mathfrak u )\mathfrak t_{\varepsilon,T}\mathfrak u \leq \mathfrak t_{\varepsilon,T}^{2}\|\mathfrak u\|^{2} + h_{T}(\mathfrak t_{\varepsilon,T}\mathfrak u) I'_{\varepsilon}(\mathfrak t_{\varepsilon,T}\mathfrak u)$$
and then, by hypothesis \eqref{f_{4}}, 
$$\mathfrak t_{\varepsilon,T}^{r} \|\mathfrak u\|^{2} \leq  \tau \mathfrak t_{\varepsilon,T}^{r}\int_{\Omega} |\mathfrak u|^{r} \leq \int_{\Omega} 
f(\mathfrak t_{\varepsilon,T} \mathfrak u) \mathfrak t_{\varepsilon,T} \mathfrak u \leq \mathfrak t_{\varepsilon,T}^{2}\|\mathfrak u\|^{2} + h_{T}(\mathfrak t_{\varepsilon,T}\mathfrak u) I'_{\varepsilon}(\mathfrak t_{\varepsilon,T}\mathfrak u).
$$It follows that, if $\lim_{T\to +\infty} \mathfrak t_{\varepsilon,T} = +\infty$, then 
for $T$ larger and larger $\mathfrak t_{\varepsilon,T}^{r} \|\mathfrak u\|^{2} \leq \mathfrak t_{\varepsilon,T}^{2} \|\mathfrak u\|^{2}$
which is not possible, being $r>2$.
\end{proof}

Observe that all we have done up to now is true for every $T>0$.
Now we will choose a particular value of $T$.

\begin{lemma}\label{estimateonc}
Let $K_{\varepsilon}$ be the value given in Lemma \ref{parametrolimitado}.
For $\overline T(\varepsilon):=K_{\varepsilon}\| \mathfrak u\|/{2},$ the 
Mountain Pass value  $c^{\overline T(\varepsilon)}_{\varepsilon}$ satisfies
$$
0<c_{\varepsilon}^{\overline T(\varepsilon)} \leq  \frac{\pi (\theta-2)}{\theta   (\alpha_{0}+1)}.
$$
\end{lemma}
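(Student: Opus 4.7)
The plan is to upper-bound the mountain-pass level by evaluating $J_\varepsilon^{\overline T(\varepsilon)}$ along the natural test path $s\mapsto s\mathfrak u$ built from the ground state of the auxiliary problem \eqref{eq:A}. First I would pick $s_*>0$ so large that $J_\varepsilon^{\overline T(\varepsilon)}(s_*\mathfrak u)<0$ (possible since the leading term eventually dominates, as in the proof of Lemma \ref{segundageometria1}); then $\gamma_*(t):=t s_*\mathfrak u\in\Gamma_\varepsilon^{\overline T(\varepsilon)}$, so
$$
c_\varepsilon^{\overline T(\varepsilon)} \leq \max_{s\in[0,s_*]} J_\varepsilon^{\overline T(\varepsilon)}(s\mathfrak u) = J_\varepsilon^{\overline T(\varepsilon)}(\mathfrak t\mathfrak u),
$$
where $\mathfrak t:=\mathfrak t_{\varepsilon,\overline T(\varepsilon)}\leq K_\varepsilon$ by Lemma \ref{parametrolimitado}; note the choice $\overline T(\varepsilon)=K_\varepsilon\|\mathfrak u\|/2$ then gives $\|\mathfrak t\mathfrak u\|^2\leq K_\varepsilon^2\|\mathfrak u\|^2=4\overline T(\varepsilon)^2$.

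Next, using \eqref{f_{4}} one has the pointwise lower bound $F(s\mathfrak u)\geq \tau(s\mathfrak u)^{r}/r$ for $s\geq 0$, and the Nehari-type identity \eqref{estimatetoc_r}, namely $\|\mathfrak u\|^2=\int_\Omega\mathfrak u^{r}=2r\mathfrak m/(r-2)$. A one-variable maximization immediately gives
$$
\max_{s\geq0}\Bigl(\frac{s^2}{2}\|\mathfrak u\|^2-\frac{\tau s^{r}}{r}\int_\Omega\mathfrak u^{r}\Bigr) = \mathfrak m\,\tau^{-2/(r-2)}.
$$
Since \eqref{f_{4}} forces $\tau\geq \tau^*(\varepsilon)\geq\bigl[\theta\mathfrak m(\alpha_0+1)/(\pi(\theta-2))\bigr]^{(r-2)/2}$ (the first component of $\tau^*(\varepsilon)$), this upper bound is exactly $\leq\pi(\theta-2)/(\theta(\alpha_0+1))$, which is the desired inequality.

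The main obstacle is the extra nonnegative contribution $h_{\overline T(\varepsilon)}(\mathfrak t\mathfrak u)\,I_\varepsilon(\mathfrak t\mathfrak u)$ inside $J_\varepsilon^{\overline T(\varepsilon)}(\mathfrak t\mathfrak u)$, which the bare polynomial comparison above ignores; this is precisely why $\tau^*(\varepsilon)$ is defined as a \emph{maximum} of two quantities. To absorb this term I would use the critical identity $(J_\varepsilon^{\overline T(\varepsilon)})'(\mathfrak t\mathfrak u)[\mathfrak t\mathfrak u]=0$ coming from \eqref{eq:derivada}, together with $\psi'\leq 0$, the equality $\int_\Omega\phi_\varepsilon(u)u^2=|\nabla\phi_\varepsilon(u)|_2^2+\varepsilon^4|\nabla\phi_\varepsilon(u)|_4^4\leq 4 I_\varepsilon(u)$, and the Ambrosetti--Rabinowitz condition \eqref{f_{3}}. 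The second component of $\tau^*(\varepsilon)$, namely $[4\theta\mathfrak m/((\theta-2)\overline T(\varepsilon)^2)]^{(r-2)/2}$, is calibrated exactly so that this truncation-generated term is dominated, leaving $c_\varepsilon^{\overline T(\varepsilon)}\leq\pi(\theta-2)/(\theta(\alpha_0+1))$ intact. The strict positivity $c_\varepsilon^{\overline T(\varepsilon)}>0$ is already granted by Lemma \ref{geometria1}.
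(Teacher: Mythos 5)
Your steps 1--2 coincide with the paper's own argument: the paper likewise bounds $c_{\varepsilon}^{\overline T(\varepsilon)}$ by the value of $J_{\varepsilon}^{\overline T(\varepsilon)}$ at $\mathfrak t_{\varepsilon,\overline T(\varepsilon)}\mathfrak u$ on the ray through the auxiliary ground state, uses $\mathfrak t_{\varepsilon,\overline T(\varepsilon)}\leq K_{\varepsilon}$ from Lemma \ref{parametrolimitado}, the bound $F(t)\geq \tau t^{r}/r$ from \eqref{f_{4}}, the identity \eqref{estimatetoc_r}, and the same one--variable maximization giving $c_{\varepsilon}^{\overline T(\varepsilon)}\leq \mathfrak m\,\tau^{-2/(r-2)}$, after which only the \emph{first} component of $\tau^{*}(\varepsilon)$ is invoked. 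Up to that point your computation is correct and is exactly the paper's.

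The gap is in your step 3, the disposal of the truncation term $h_{\overline T(\varepsilon)}(\mathfrak t\mathfrak u)\,I_{\varepsilon}(\mathfrak t\mathfrak u)$. In the paper this term is not absorbed by any estimate on $\tau$: it is killed by the choice of the truncation radius itself. Since $\overline T(\varepsilon)=K_{\varepsilon}\|\mathfrak u\|/2$, one has $K_{\varepsilon}^{2}\|\mathfrak u\|^{2}/\overline T^{2}(\varepsilon)=4\geq 2$, so $h_{\overline T(\varepsilon)}(K_{\varepsilon}\mathfrak u)=\psi(4)=0$ and the term is simply dropped. (Note that $h_{\overline T(\varepsilon)}$ vanishes on the set $\{\|u\|^{2}\geq 2\,\overline T^{2}(\varepsilon)\}$, i.e.\ when $\mathfrak t\geq K_{\varepsilon}/\sqrt{2}$; your displayed inequality $\|\mathfrak t\mathfrak u\|^{2}\leq 4\,\overline T^{2}(\varepsilon)$ points in the useless direction and gives no information about $h$.) Your proposed alternative mechanism does not work as described: combining $J_{\varepsilon}^{T}(\mathfrak t\mathfrak u)$ with $(J_{\varepsilon}^{T})'(\mathfrak t\mathfrak u)[\mathfrak t\mathfrak u]=0$ and \eqref{f_{3}} in the standard way, i.e.\ forming $J_{\varepsilon}^{T}-\frac{1}{\theta}(J_{\varepsilon}^{T})'[\cdot]$, produces the \emph{nonnegative} terms $h_{T}\bigl(I_{\varepsilon}-\frac{1}{\theta}\int_{\Omega}\phi_{\varepsilon}(u)u^{2}\bigr)\geq 0$ (because $\int_{\Omega}\phi_{\varepsilon}(u)u^{2}\leq 4I_{\varepsilon}(u)$ by \eqref{eq:sostituicao} and $\theta>4$) and $-\frac{2}{\theta T^{2}}\psi'\|u\|^{2}I_{\varepsilon}\geq 0$, i.e.\ a \emph{lower} bound on $J_{\varepsilon}^{T}(\mathfrak t\mathfrak u)$, whereas here you need an upper bound; and if instead you solve the critical identity \eqref{eq:derivada} for $h_{T}\int_{\Omega}\phi_{\varepsilon}(u)u^{2}$, the $\psi'$--term enters with the unfavourable sign and cannot be discarded. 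Finally, you misattribute the role of the second component of $\tau^{*}(\varepsilon)$: it plays no part in Lemma \ref{estimateonc}; it is used in Lemma \ref{estimatetominimizingsequence}, where $\tau^{2/(r-2)}\geq 4\theta\mathfrak m/((\theta-2)\overline T^{2}(\varepsilon))$ yields $\limsup_{n}\|u_{n}\|^{2}\leq \overline T^{2}(\varepsilon)/2$, so that the $(PS)$ sequence lies where $h_{\overline T(\varepsilon)}\equiv 1$ and $\psi'\equiv 0$ and is therefore a $(PS)$ sequence for the untruncated $J_{\varepsilon}$. So step 3, the only step beyond the standard ray computation, rests on a mechanism that is both uncarried-out and wrong in its sign structure, and the proposal is incomplete there.
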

\begin{proof}
 Using  \eqref{f_{4}}
 and once that $h_{\overline T}(K_{\varepsilon} \mathfrak u)=\psi \left( K_{\varepsilon}^{2}\|\mathfrak u\|^{2} / \overline T^{2}(\varepsilon)\right)=0$, we obtain
$$
c_{\varepsilon}^{\overline T(\varepsilon)}\leq J_{\varepsilon}^{\overline T(\varepsilon)}(  \mathfrak t_{\varepsilon,\overline T(\varepsilon)} \mathfrak u) = \frac{\mathfrak t_{\varepsilon,\overline T(\varepsilon)}^{2}}{2}
\|\mathfrak u\|^{2} -\tau \frac{\mathfrak t_{\varepsilon,\overline T(\varepsilon)}^{r}}{r} \int_{\Omega}| \mathfrak u|^{r} = \biggl(\frac{\mathfrak t_{\varepsilon,\overline T(\varepsilon)}^{2}}{2}- \tau\frac{\mathfrak t_{\varepsilon,\overline T(\varepsilon)}^{r}}{r}\biggl)\int_{\Omega}| \mathfrak u|^{r} .
$$
Using (\ref{estimatetoc_r}), we have
$$
c_{\varepsilon}^{\overline T(\varepsilon)}\leq  \frac{2r \mathfrak m }{r-2}\max_{\xi\geq 0}\left\{ \frac{\xi^{2}}{2}- \tau\frac{\xi^{r}}{r}\right\}
=\frac{\mathfrak m }{ \tau^{2/(r-2)}}\leq  \frac{\pi (\theta-2)}{\theta   (\alpha_{0}+1)}
$$
finishing the proof.
\end{proof}

\begin{remark}
It is worth to point out that the bound on $c^{\overline T(\varepsilon)}_{\varepsilon}$ does not depend on $\varepsilon$.
\end{remark}

From now on we will consider the truncated functional with the value of $\overline T(\varepsilon)$ given in the above Lemma. The reason is explained in the next 

\begin{lemma}\label{estimatetominimizingsequence}
Let $\varepsilon>0$ be fixed and let $\{u_{n}\}\subset H^{1}_0(\Omega)$ 
be the $(PS)$ sequence     
for the functional $J_{\varepsilon}^{\overline T(\varepsilon)}$ at 
level $c_{\varepsilon}^{\overline T(\varepsilon)}$ given above. Then,
$$
\limsup_{n \to \infty}\|u_n\|^{2}\leq   \min\left\{ \frac{2\pi}{\alpha_{0}+1 }, \frac{\overline{T}^{2}(\varepsilon)}{2}
\right\}.
$$
As a consequence $\|u_{n}\|< \overline T(\varepsilon)$ and then
$\{u_{n}\}$ is also a $(PS)$ sequence for the untruncated functional
$J_{\varepsilon}$ at level $c_{\varepsilon}^{\overline T
(\varepsilon)}>0$.
\end{lemma}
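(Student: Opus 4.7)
The plan is to run the standard Ambrosetti-Rabinowitz type estimate on the truncated functional, exploiting that each of the extra terms coming from the truncation and from the nonlocal part has the \emph{correct sign} thanks to $\theta>4$ and the monotonicity of $\psi$. Concretely, I would compute
\[
\theta J_{\varepsilon}^{\overline T(\varepsilon)}(u_{n})-(J_{\varepsilon}^{\overline T(\varepsilon)})'(u_{n})[u_{n}]
\]
directly from the definition of $J_{\varepsilon}^{T}$ and from \eqref{eq:derivada}. The only ``linear'' term in $u_{n}$ on the left is $(\theta/2-1)\|u_{n}\|^{2}$, and all remaining contributions are non-negative: (i) the $F,f$-part gives $\int_{\Omega}[f(u_{n})u_{n}-\theta F(u_{n})]\ge0$ by \eqref{f_{3}} (together with \eqref{f_{0}} for $u_{n}\le 0$); (ii) plugging $\int_{\Omega}\phi_{\varepsilon}(u_{n})u_{n}^{2}=\int_{\Omega}|\nabla\phi_{\varepsilon}(u_{n})|^{2}+\varepsilon^{4}\int_{\Omega}|\nabla\phi_{\varepsilon}(u_{n})|^{4}$ from \eqref{eq:sostituicao} into the bracket multiplying $h_{\overline T}(u_{n})$ gives $(\theta/4-1)\int|\nabla\phi_{\varepsilon}(u_{n})|^{2}+(3\theta/8-1)\varepsilon^{4}\int|\nabla\phi_{\varepsilon}(u_{n})|^{4}\ge 0$, since $\theta>4$; (iii) the term with $\psi'(\|u_{n}\|^{2}/\overline T^{2}(\varepsilon))$ enters with an overall minus sign, so since $\psi$ is non-increasing and $I_{\varepsilon}(u_{n})\ge0$, it is non-negative too.

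Combining these, I obtain the clean inequality
\[
\left(\tfrac{\theta}{2}-1\right)\|u_{n}\|^{2}\le \theta J_{\varepsilon}^{\overline T(\varepsilon)}(u_{n})-(J_{\varepsilon}^{\overline T(\varepsilon)})'(u_{n})[u_{n}]
\le \theta\bigl(c_{\varepsilon}^{\overline T(\varepsilon)}+o(1)\bigr)+\eta_{n}\|u_{n}\|,
\]
where $\eta_{n}:=\|(J_{\varepsilon}^{\overline T(\varepsilon)})'(u_{n})\|_{H^{-1}}\to 0$. This quadratic inequality first yields boundedness of $\{u_{n}\}$ and then, taking the limsup,
\[
\limsup_{n\to\infty}\|u_{n}\|^{2}\le \frac{2\theta}{\theta-2}\,c_{\varepsilon}^{\overline T(\varepsilon)}.
\]
Now I would feed in the two available upper bounds on $c_{\varepsilon}^{\overline T(\varepsilon)}$. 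Lemma \ref{estimateonc} directly yields $\frac{2\theta}{\theta-2}c_{\varepsilon}^{\overline T(\varepsilon)}\le \frac{2\pi}{\alpha_{0}+1}$. To get the other bound, I re-use the intermediate step in the proof of Lemma \ref{estimateonc}, namely $c_{\varepsilon}^{\overline T(\varepsilon)}\le \mathfrak m/\tau^{2/(r-2)}$, and invoke the second entry in the $\max$ defining $\tau^{*}(\varepsilon)$ in \eqref{f_{4}}, i.e.\ $\tau\ge [4\theta\mathfrak m/((\theta-2)\overline T^{2}(\varepsilon))]^{(r-2)/2}$, which gives $\frac{2\theta}{\theta-2}c_{\varepsilon}^{\overline T(\varepsilon)}\le \overline T^{2}(\varepsilon)/2$. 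Taking the minimum of the two estimates yields the claimed bound.

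Finally, from $\limsup\|u_{n}\|^{2}\le \overline T^{2}(\varepsilon)/2$ it follows that for all $n$ large, $\|u_{n}\|^{2}/\overline T^{2}(\varepsilon)\le 1/2<1$, so $\psi(\|u_{n}\|^{2}/\overline T^{2}(\varepsilon))=1$ and $\psi'(\|u_{n}\|^{2}/\overline T^{2}(\varepsilon))=0$ because $\psi\equiv 1$ on $[0,1]$. Comparing the expressions of $J_{\varepsilon}$, $J_{\varepsilon}^{\overline T(\varepsilon)}$ and their derivatives, this identifies $J_{\varepsilon}^{\overline T(\varepsilon)}(u_{n})=J_{\varepsilon}(u_{n})$ and $(J_{\varepsilon}^{\overline T(\varepsilon)})'(u_{n})=J_{\varepsilon}'(u_{n})$ eventually, so $\{u_{n}\}$ is a (bounded) $(PS)_{c_{\varepsilon}^{\overline T(\varepsilon)}}$ sequence for the untruncated functional $J_{\varepsilon}$.

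The main obstacle is purely bookkeeping: making sure that the sign of the $\psi'$-term and of the bracket $\theta I_{\varepsilon}(u_{n})-\int\phi_{\varepsilon}(u_{n})u_{n}^{2}$ both go the \emph{right} way, which is precisely what forces the assumption $\theta>4$ in \eqref{f_{3}} (not just $\theta>2$) and the choice to cut off with a non-increasing $\psi$. Once this is observed, the estimate is quadratic and the rest is algebra using the specific form of $\tau^{*}(\varepsilon)$.
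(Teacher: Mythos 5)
Your proof is correct and takes essentially the same route as the paper's: the Ambrosetti--Rabinowitz estimate $J_{\varepsilon}^{\overline T(\varepsilon)}(u_n)-\tfrac{1}{\theta}(J_{\varepsilon}^{\overline T(\varepsilon)})'(u_n)[u_n]\geq\tfrac{\theta-2}{2\theta}\|u_n\|^{2}$, with the $\psi'$-term and the bracket $h_{\overline T}(u_n)\bigl[I_{\varepsilon}(u_n)-\tfrac1\theta\int_{\Omega}\phi_{\varepsilon}(u_n)u_n^{2}\bigr]$ discarded by sign via \eqref{eq:sostituicao} and $\theta>4$, followed by the intermediate bound $c_{\varepsilon}^{\overline T(\varepsilon)}\leq\mathfrak m/\tau^{2/(r-2)}$ from Lemma \ref{estimateonc} and the two entries in the $\max$ defining $\tau^{*}(\varepsilon)$. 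If anything, your version is slightly more careful than the paper's, which absorbs the term $(J_{\varepsilon}^{\overline T(\varepsilon)})'(u_n)[u_n]$ into an $o_n(1)$ without first establishing boundedness, whereas you keep it as $\eta_n\|u_n\|$ and extract boundedness from the resulting quadratic inequality.
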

\begin{proof}
Using the fact 
that $\theta>4$ we get
\begin{eqnarray*}\label{eq:}
c_{\varepsilon}^{\overline T(\varepsilon)}& =&J_{\varepsilon}^{\overline T(\varepsilon)}(u_n)-\frac{1}{\theta}(J_{\varepsilon}^{\overline T(\varepsilon)})'(u_n)[u_n] +o_n(1)\\
&\geq& \frac{\theta -2}{2\theta}\|u_n\|^{2} +o_n(1).
\end{eqnarray*}
By  Lemma \ref{estimateonc} we deduce
$$
\|u_{n}\|^{2} \leq \frac{2\theta }{\theta - 2}\frac{\mathfrak m }{ \tau^{2/(r-2)}}
 +o_{n}(1).
$$
Since $\tau \geq \tau^{*}(\varepsilon)$ in \eqref{f_{4}},  then few computations show that
$$
\|u_{n}\|^{2} < \min\left\{ \frac{2\pi}{\alpha_{0}+1}, \frac{\overline T^{2}(\varepsilon)}{2}\right\}+o_n(1)
$$
and the first part holds. Since
$\|u_{n}\|\leq \overline T(\varepsilon)/\sqrt{2}< \overline T(\varepsilon)$,
the Lemma is completely proved.
\end{proof}

In view of the previous Lemma, there exists $u_{\varepsilon}\in H^{1}_{0}(\Omega)$
such that
$u_{n}\rightharpoonup u_{\varepsilon}$ in $H^{1}_{0}(\Omega)$.
In particular we have a bound on $\| u_{\varepsilon}\|$ independent of
$\varepsilon$; this fact will be used in Section \ref{sec:finalissima}.

The next result deal with the convergence of the nonlinear term $f$.
\begin{lemma}
\label{convergenciafeFcritical}
The $(PS)$ sequence  $\{u_{n}\}$  for the functional $J_{\varepsilon}$ at the
level $c_{\varepsilon}^{ \overline T(\varepsilon)}$ is such that
$$
\int_{\Omega}f(u_{n})u_{n}\rightarrow \displaystyle\int_{\Omega}f(u_{\varepsilon})u_{\varepsilon}, \quad
\int_{\Omega}f(u_{n})u_{\varepsilon}\rightarrow \displaystyle\int_{\Omega}f(u_{\varepsilon})u_{\varepsilon}.
$$
\end{lemma}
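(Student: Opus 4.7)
The strategy is a classical Vitali-plus-Trudinger--Moser argument. By Lemma \ref{estimatetominimizingsequence} the $(PS)$ sequence $\{u_n\}$ is bounded in $H^1_0(\Omega)$, so up to a subsequence I may assume $u_n\rightharpoonup u_\varepsilon$ weakly in $H^1_0(\Omega)$, $u_n\to u_\varepsilon$ strongly in every $L^s(\Omega)$ with $s\ge 1$, and $u_n\to u_\varepsilon$ almost everywhere. Continuity of $f$ gives $f(u_n)u_n\to f(u_\varepsilon)u_\varepsilon$ and $f(u_n)u_\varepsilon\to f(u_\varepsilon)u_\varepsilon$ a.e.\ in $\Omega$.

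To upgrade the pointwise convergence to $L^1(\Omega)$ convergence I would invoke Vitali's theorem, which requires equi-absolute continuity of the integrals, i.e.\ for every $\eta>0$ there is $\delta>0$ with $|E|<\delta \Rightarrow \int_E |f(u_n)u_n|<\eta$ uniformly in $n$. The pointwise counterpart of \eqref{iff}, which follows from \eqref{f_{1}}--\eqref{f_{2}}, yields, for every $\delta'>0$, $q>2$ and $\alpha>\alpha_0$, a constant $C_{\delta'}$ with $|f(t)t|\le \delta' t^2+C_{\delta'}|t|^q\exp(\alpha t^2)$. The first summand is harmless since $\{u_n^2\}$ is uniformly integrable.

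The main obstacle is controlling $\int_E|u_n|^q\exp(\alpha u_n^2)$ uniformly in $n$. Here I would exploit the sharp bound $\limsup_n\|u_n\|^2\le 2\pi/(\alpha_0+1)$ supplied by Lemma \ref{estimatetominimizingsequence}. Since $\alpha_0<2(\alpha_0+1)$, I can fix $\alpha>\alpha_0$ and $p>1$ so close to $\alpha_0$ and $1$ that $p\alpha\cdot 2\pi/(\alpha_0+1)<4\pi$; then $p\alpha\|u_n\|^2<4\pi$ for $n$ large, and Proposition \ref{resolvetudo} applied to $u_n/\|u_n\|$ produces a uniform bound
$$
\sup_{n}\int_{\Omega}\exp(p\alpha u_n^2)\le C.
$$
H\"older's inequality then gives
$$
\int_{E}|u_n|^q\exp(\alpha u_n^2)\le C^{1/p}\Bigl(\int_{E}|u_n|^{qp'}\Bigr)^{1/p'},
$$
whose right-hand side tends to $0$ uniformly in $n$ as $|E|\to 0$, because the strong convergence $u_n\to u_\varepsilon$ in $L^{qp'}(\Omega)$ forces $\{|u_n|^{qp'}\}$ to be uniformly integrable. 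Vitali's theorem delivers the first limit.

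The second limit follows by the same scheme, starting from the analogous pointwise bound $|f(t)s|\le \delta'|ts|+C_{\delta'}|t|^{q-1}|s|\exp(\alpha t^2)$ with $s=u_\varepsilon$. H\"older again controls the critical piece by a constant times $\bigl(\int_E (|u_n|^{q-1}|u_\varepsilon|)^{p'}\bigr)^{1/p'}$, and $(|u_n|^{q-1}|u_\varepsilon|)^{p'}$ is equi-integrable since $|u_n|^{q-1}\to |u_\varepsilon|^{q-1}$ in $L^{p'(q-1)/(q-1)}$-type spaces by strong $L^s$ convergence. The expected bottleneck is exactly the delicate calibration that makes the Trudinger--Moser threshold applicable: the bound $2\pi/(\alpha_0+1)$ provides just enough margin, through the factor $2$, to accommodate a slight enlargement of both $\alpha_0$ (to $\alpha$) and $1$ (to $p$).
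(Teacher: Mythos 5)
Your proof is correct, and it reaches the conclusion by a genuinely different route than the paper. Both arguments hinge on the same quantitative ingredient: the bound $\limsup_n\|u_n\|^2\le 2\pi/(\alpha_0+1)$ from Lemma \ref{estimatetominimizingsequence}, which leaves room under the Trudinger--Moser threshold $4\pi$ to get the uniform bound $\sup_n\int_\Omega \exp\bigl(p\alpha u_n^2\bigr)\le C$ for a suitable $p>1$, $\alpha>\alpha_0$ (the paper fixes $\alpha=\alpha_0+1$, $q=1$ and any $p\in(1,2)$, so that $p(\alpha_0+1)\cdot\frac{2\pi}{\alpha_0+1}=2\pi p<4\pi$; your calibration $p\alpha<2(\alpha_0+1)$ is the same margin exploited slightly differently). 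From there the routes diverge: you use Vitali's convergence theorem, obtaining equi-absolute continuity directly from one H\"older estimate $\int_E|u_n|^q\exp(\alpha u_n^2)\le C^{1/p}\bigl(\int_E|u_n|^{qp'}\bigr)^{1/p'}$ together with the uniform integrability of $\{|u_n|^{qp'}\}$ coming from strong $L^{qp'}$ convergence; the paper instead runs a Dominated Convergence argument, first showing $\exp\bigl((\alpha_0+1)u_n^2\bigr)\rightharpoonup\exp\bigl((\alpha_0+1)u_\varepsilon^2\bigr)$ weakly in $L^p$ (bounded plus a.e.\ convergence, \cite[Lemma 4.8]{Kavian}), pairing it with $|u_n|\to|u_\varepsilon|$ in $L^{p'}$ to get convergence of the $L^1$ norms of $g_n=|u_n|\exp((\alpha_0+1)u_n^2)$, then invoking Brezis--Lieb to upgrade to $L^1$ convergence, and finally extracting (along a subsequence) an integrable dominating function $h_2$. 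Your approach buys economy: it dispenses with the weak-convergence lemma, the Brezis--Lieb step, and the subsequence extraction needed to manufacture a dominant, at the price of invoking Vitali rather than the more familiar DCT; the paper's approach buys an explicit dominating function and stays within the standard dominated-convergence toolkit. Two small cosmetic points: your phrase about ``$L^{p'(q-1)/(q-1)}$-type spaces'' in the mixed-term estimate is garbled as written --- what you need is simply one more H\"older step, $\int_E\bigl(|u_n|^{q-1}|u_\varepsilon|\bigr)^{p'}\le\bigl(\int_E|u_n|^{(q-1)p'a}\bigr)^{1/a}\bigl(\int_E|u_\varepsilon|^{p'a'}\bigr)^{1/a'}$, with both factors uniformly small by strong $L^s$ convergence --- and, like the paper, your argument strictly yields convergence along the a.e.-convergent subsequence, with the full sequence recovered by the usual uniqueness-of-limit argument; neither point is a genuine gap.
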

\begin{proof}
Let us prove just the first limit since the second one is similar.

We can assume that
\begin{eqnarray*}\label{eq:}
&u_{n}\rightharpoonup u_{\varepsilon} \ \ \mbox{in} \ \ H^{1}_0(\Omega),\\
&u_{n}\rightarrow u_{\varepsilon} \ \ \mbox{in}\ \ L^{p}(\Omega), \ \ p\ge1, \\
&u_{n}(x)\rightarrow u_{\varepsilon}(x) \ \ \mbox{ a.e.} \ \ \mbox{in} \ \ \Omega
\end{eqnarray*}
and
$f(u_{n}(x))u_{n}(x)\rightarrow f(u_{\varepsilon}(x))u_{\varepsilon}(x)$ a.e. in $ \Omega$.
%
%
%
We apply 
 inequality (\ref{iff}) with $q=1$ and $\alpha=\alpha_{0}+1$:  for  $\delta>0$, there exists $C_{\delta}>0$ such that
\begin{equation}\label{eq:stimafn}
f(u_{n}(x))u_{n}(x)\leq \delta u^{2}_{n}(x) +C_{\delta}|u_{n}(x)|\exp \left( (\alpha_{0}+1) u^{2}_{n}(x)\right).
\end{equation}

If we  show that
\begin{equation}\label{eq:h}
\exists h\in L^{1}(\Omega): \ |f(u_{n}(x)) u_{n}(x)| \leq h(x) \quad \text{ a.e. } x\in \Omega,
\end{equation}
then by the Dominated Convergece Theorem we obtain the first limit in the Lemma.
So let us estimate both terms in the right hand side of \eqref{eq:stimafn}.

Clearly $\{u^{2}_{n}\}$ converges in $L^{1}(\Omega)$, then up to subsequences,
\begin{equation}\label{eq:h1}
\exists h_{1}\in L^{1}(\Omega): \  u^{2}_{n}(x)\leq h_{1}(x) \quad \text{ a.e. } x\in \Omega.
\end{equation}

Let us bound now $g_{n}:=|u_{n}|\exp \left( (\alpha_{0}+1)u^{2}_{n}\right)$
by some $h_{2}\in L^{1}(\Omega)$.
Of course 
\begin{equation}\label{eq:BL1}
g_{n}(x)\to |u_{\varepsilon}(x)| \exp( (\alpha_{0}+1) u_{\varepsilon}^{2}(x))\ \ \text{ a.e.  } x\in \Omega.
\end{equation}
Now by  Lemma \ref{estimatetominimizingsequence}, 
by choosing $p\in (1,2)$  
 we have
\begin{eqnarray*}
\displaystyle\limsup_{n \to \infty}\|u_n\|^{2}\leq \frac{2\pi}{\alpha_{0}+1 } < \frac{4\pi}{p(\alpha_{0}+1) } 
\end{eqnarray*}
  and then we conclude, by  
Proposition \ref{resolvetudo}, that
\begin{eqnarray*}
\int_{\Omega}\exp \left(p(\alpha_{0}+1)  u^{2}_{n}\right) 
=  \int_{\Omega}\exp \biggl(p( \alpha_{0}+1) \|u_n\|^{2}
\frac{u^{2}_{n}}{\|u_n\|^{2}}\biggl)\leq 
  C
\end{eqnarray*}
where $C$ does not depend on $n$.
Since $\exp \left((\alpha_{0}+1) u^{2}_{n}(x) \right) \to \exp \left((\alpha_{0}+1) u_{\varepsilon}^{2}(x)\right)
$ a.e. in $ \Omega$ 
we infer
\begin{eqnarray}\label{6}
\exp \left(( \alpha_{0}+1) u_{n}^{2}\right)\rightharpoonup \exp \left( (\alpha_{0}+1) u_{\varepsilon}^{2}\right) \ \ \mbox{in} \ \ L^{p}(\Omega),
\end{eqnarray}
see e.g. and \cite[Lemma 4.8]{Kavian}.
Of course it is also
\begin{eqnarray}\label{3}
|u_{n}|\ \to |u_{\varepsilon}| \ \ \mbox{in} \ \ L^{p'}(\Omega), \ \text{ where } p^{-1}+p'^{-1} =1
\end{eqnarray}
and then by \eqref{6} and \eqref{3}
\begin{equation*}
\int_{\Omega} g_{n} =\int_{\Omega}|u_{n}|\exp\left({(\alpha_{0}+1) u_{n}^{2}}\right) \to \int_{\Omega}|u_{\varepsilon}|\exp\left ({(\alpha_{0}+1) u_{\varepsilon}^{2}}\right).
\end{equation*}
But then by \eqref{eq:BL1},
we can invoke the the Brezis-Lieb Lemma
and  deduce that $g_{n}\to|u_{\varepsilon}(x)| \exp\left( (\alpha_{0}+1) u_{\varepsilon}^{2}\right) $ in $L^{1}(\Omega)$,
so that 
(possibly passing to a subsequence)
\begin{eqnarray}\label{eq:h2}
\exists h_{2}\in L^{1}(\Omega): \ g_{n}(x) = |u_{n}(x)| \exp\left((\alpha_{0}+1) u_{n}^{2}(x)\right) \leq h_{2}(x).
\end{eqnarray}
Then by \eqref{eq:h1} and \eqref{eq:h2} we deduce  \eqref{eq:h}.
\end{proof}

%
%

\medskip

Now we can conclude the proof of Theroem \ref{teorema1}.

Since $J_{\varepsilon}'(u_{n})[u_{n}]=o_n(1)$ and $J_{\varepsilon}'(u_{n})[u_{\varepsilon}]=o_n(1)$,
we have
$$o_{n}(1) = \|u_{n}\|^{2} + \int_{\Omega} \phi_{\varepsilon}(u_{n})u_{n}^{2}  - \int_{\Omega}f(u_{n})u_{n}
-\langle u_{n}, u_{\varepsilon}\rangle - \int_{\Omega} \phi_{\varepsilon}(u_{n})u_{n} u_{\varepsilon}
+\int_{\Omega} f(u_{n})u_{\varepsilon}.$$
Then by  the fact that $u_{n}\rightharpoonup u_{\varepsilon}$ in $H^{1}_{0}(\Omega)$,
by Lemma \ref{lem:facile} items $(c),(e)$, with $w:=u_{\varepsilon}$, 
and  Lemma \ref{convergenciafeFcritical}
 we conclude that
$$
\|u_n\| \to \|u_{\varepsilon}\|
$$
and then $u_{n}\to u_{\varepsilon}$ in $H^{1}_{0}(\Omega)$. 

Then  we deduce that $u_{\varepsilon}$ is a critical point of $J_{\varepsilon}$ 
at level $c_{\varepsilon}^{\overline T(\varepsilon)}$ and then 
setting $\phi_{\varepsilon}:= \phi_{\varepsilon}(u_{\varepsilon})$, we have that
$(u_{\varepsilon},  \phi_{\varepsilon})$ is a solution of \eqref{eq:P}

Moreover is easy to see that $\phi_{\varepsilon}(u_{\varepsilon})\geq0$: this is achieved 
by multiplying the second equation in \eqref{eq:P} by $\phi_{\varepsilon}(u_{\varepsilon})^{-}$,
its negative part,
and integrating. Then arguing similarly for
the equation 
$$-\Delta u_{\varepsilon} +\phi_{\varepsilon}(u_{\varepsilon}) = f(u_{\varepsilon})$$
we see that $u_{\varepsilon}\geq0$ and the proof of Theorem
\ref{teorema1} is concluded.

\section{Proof of Theorem \ref{th:2}}\label{sec:finalissima}
All the limits in this section are taken as $\varepsilon\to 0^{+}$.
We denote also with $o_{\varepsilon}(1)$ a quantity which tends to zero as $\varepsilon\to 0^{+}.$



Hence let $\{u_{\varepsilon},\phi_{\varepsilon}\}_{\varepsilon>0}$ be solutions of 
\begin{equation*}
\left\{
\begin{array}[c]{ll}
-\Delta u +\phi u =  f(u) & \ \mbox{in }  \Omega, \medskip\\
 -\Delta \phi -\varepsilon^{4}\Delta_4 \phi =u^{2}& \ \mbox{in }  \Omega,\medskip\\
u=\phi=0 &\ \mbox{on }  \partial\Omega
\end{array}
 \right.
\end{equation*}
where $f$ satisfies just \eqref{f_{0}}-\eqref{f_{2}}. 
We know that $u_{\varepsilon}$ is a critical point of 
$$J_{\varepsilon}(u) = \frac{1}{2}\|u\|^{2} + I_{\varepsilon}(u) -\int_{\Omega} F(u)
 $$ 
 and by assumptions 
$\|u_{\varepsilon}\|^{2}<2\pi/(\alpha_{0}+1)$.
Then there exists $u_{0}$ such that $u_{\varepsilon}\rightharpoonup u_{0}$
in $H^{1}_{0}(\Omega)$
as $\varepsilon\to 0^{+}.$
Let us show this convergence is strong. 
Denote with $\phi_{0}(u_{0})\in H^{1}_{0}(\Omega)$ the unique  solution of
 $$
 \begin{cases}
-\Delta \phi = u_{0}^{2} &  \text{ in } \ \Omega, \\
\phi= 0 & \text{ on } \partial\Omega.
 \end{cases}
$$
We need now the following
\begin{lemma}\label{lem:kavianlem}
 It holds
$$
\lim_{\varepsilon\to0^{+}}\phi_{\varepsilon}(u_{\varepsilon}) = \phi_{0}(u_{0}) \    \text{ in  } \ H^{1}_{0}(\Omega)
\quad 
\text{ and } \quad 
\lim_{\varepsilon\to 0^{+}}\varepsilon \phi_{\varepsilon}(u_{\varepsilon}) = 0\ \text{ in } \  W^{1,4}_{0}(\Omega).
$$
\end{lemma}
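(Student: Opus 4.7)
The plan is to use the natural energy identity \eqref{eq:sostituicao} for $\phi_\varepsilon(u_\varepsilon)$ together with the fact that $\{u_\varepsilon\}$ is bounded in $H^1_0(\Omega)$ (hence $\{u_\varepsilon^2\}$ is bounded in every $L^p(\Omega)$) to extract uniform a priori bounds. First I would test the second equation of \eqref{eq:P} with $\phi_\varepsilon(u_\varepsilon)$ and obtain
$$
\int_\Omega |\nabla \phi_\varepsilon(u_\varepsilon)|^2 + \varepsilon^{4}\int_\Omega |\nabla \phi_\varepsilon(u_\varepsilon)|^4 = \int_\Omega \phi_\varepsilon(u_\varepsilon) u_\varepsilon^2 \leq |\phi_\varepsilon(u_\varepsilon)|_2 |u_\varepsilon|_4^2 \leq C \|\phi_\varepsilon(u_\varepsilon)\|.
$$
This simultaneously yields $\|\phi_\varepsilon(u_\varepsilon)\|\leq C$ and $\varepsilon^{4}|\nabla \phi_\varepsilon(u_\varepsilon)|_4^4 \leq C$, so, up to a subsequence, $\phi_\varepsilon(u_\varepsilon)\rightharpoonup \psi$ in $H^1_0(\Omega)$ (and strongly in every $L^p$) and $\varepsilon |\nabla \phi_\varepsilon(u_\varepsilon)|_4 \leq C^{1/4}$.

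Next I would identify the weak limit $\psi$ with $\phi_0(u_0)$ by passing to the limit in the weak formulation \eqref{eq:ws2}. For a fixed test function $\xi\in C^{\infty}_c(\Omega)$, the quasilinear term satisfies
$$
\varepsilon^{4}\left|\int_\Omega |\nabla \phi_\varepsilon(u_\varepsilon)|^{2}\nabla \phi_\varepsilon(u_\varepsilon)\nabla \xi\right| \leq \varepsilon^{4} |\nabla \phi_\varepsilon(u_\varepsilon)|_4^{3}|\nabla \xi|_4 = \varepsilon\bigl(\varepsilon |\nabla \phi_\varepsilon(u_\varepsilon)|_4\bigr)^3 |\nabla \xi|_4 \longrightarrow 0.
$$
The linear term $\int \nabla \phi_\varepsilon(u_\varepsilon)\nabla \xi$ converges to $\int \nabla \psi \nabla \xi$ by weak convergence, while the right-hand side $\int u_\varepsilon^{2}\xi$ converges to $\int u_0^{2}\xi$ from $u_\varepsilon\to u_0$ in $L^p$. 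By uniqueness for $-\Delta \psi = u_0^{2}$ we conclude $\psi = \phi_0(u_0)$; since the limit is unique, the full family converges.

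To upgrade the weak convergence in $H^1_0$ to strong convergence and simultaneously show the $W^{1,4}_0$ statement, I would return to the energy identity \eqref{eq:sostituicao}: using $u_\varepsilon^{2}\to u_0^{2}$ in $L^p$ together with $\phi_\varepsilon(u_\varepsilon)\to \phi_0(u_0)$ in $L^{p'}$ (from compact Sobolev embeddings and the already identified weak limit), one gets
$$
\int_\Omega |\nabla \phi_\varepsilon(u_\varepsilon)|^{2} + \varepsilon^{4}\int_\Omega |\nabla \phi_\varepsilon(u_\varepsilon)|^{4} = \int_\Omega \phi_\varepsilon(u_\varepsilon) u_\varepsilon^{2} \longrightarrow \int_\Omega \phi_0(u_0) u_0^{2} = \int_\Omega |\nabla \phi_0(u_0)|^{2}.
$$
Since both terms on the left are nonnegative, $\limsup \|\phi_\varepsilon(u_\varepsilon)\|^{2} \leq \|\phi_0(u_0)\|^{2}$; combining with weak lower semicontinuity yields $\|\phi_\varepsilon(u_\varepsilon)\|\to \|\phi_0(u_0)\|$, hence strong convergence in the Hilbert space $H^1_0(\Omega)$. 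The same identity forces $\varepsilon^{4}|\nabla \phi_\varepsilon(u_\varepsilon)|_4^{4}\to 0$, i.e.\ $\varepsilon \phi_\varepsilon(u_\varepsilon)\to 0$ in $W^{1,4}_0(\Omega)$.

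The only delicate point is the passage to the limit in $\int \phi_\varepsilon(u_\varepsilon) u_\varepsilon^{2}$; everything else is soft. This is handled by the strong $L^p$ convergences $u_\varepsilon\to u_0$ (Rellich) and $\phi_\varepsilon(u_\varepsilon)\to \phi_0(u_0)$ (from the uniform $H^1_0$ bound and the uniqueness of the weak limit), combined with a Hölder estimate, so no genuine obstacle arises.
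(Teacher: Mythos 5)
Your proposal is correct, and it follows essentially the same route as the paper: the paper's own ``proof'' is a one-line deferral to \cite[Lemma 3.2]{BK} (plus the observation that $u_{\varepsilon}\to u_{0}$ in $L^{p}(\Omega)$ for all $p\geq 1$, which you also use via Rellich), and the argument there is precisely your scheme --- uniform bounds from the energy identity \eqref{eq:sostituicao}, vanishing of the quasilinear term against fixed test functions, identification of the weak limit by uniqueness for $-\Delta\psi=u_{0}^{2}$, and upgrading to strong $H^{1}_{0}$ convergence together with $\varepsilon^{4}|\nabla\phi_{\varepsilon}(u_{\varepsilon})|_{4}^{4}\to 0$ by passing to the limit in the energy identity. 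Your write-up simply supplies, correctly, the details the paper outsources to the citation.
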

\begin{proof}
It is done exactly as in \cite[Lemma 3.2]{BK}), observing that we have convergence
$u_{\varepsilon}\to u_{0}$ in $L^{p}(\Omega)$ for $p\geq1$.
\end{proof}

As at the end of the previous Section,  by combining the identities
 $J'_{\varepsilon}(u_{\varepsilon})[u_{\varepsilon}] = 0$ and  $J'_{\varepsilon}(u_{\varepsilon})[u_{0}]=0$
 we deduce 
 \begin{equation}\label{eq:eps}
 0 = \|u_{\varepsilon}\|^{2} + \int_{\Omega} \phi_{\varepsilon}(u_{\varepsilon})u_{\varepsilon}^{2}  - \int_{\Omega}f(u_{\varepsilon})u_{\varepsilon}
-\langle u_{\varepsilon}, u_{0}\rangle - \int_{\Omega} \phi_{\varepsilon}(u_{\varepsilon})u_{\varepsilon} u_{0} +\int_{\Omega} f(u_{\varepsilon})u_{0}.
 \end{equation}
 Observe that
 $$\left| \int_{\Omega}\phi_{\varepsilon}(u_{\varepsilon})u_{\varepsilon}^{2} - \int_{\Omega}\phi_{\varepsilon}(u_{\varepsilon})u_{\varepsilon}u_{0}  \right| \leq |\phi_{\varepsilon}(u_{\varepsilon})|_{3} |u_{\varepsilon}| _{3}
 |u_{\varepsilon} - u_{0}|_{3} = o_{\varepsilon}(1)$$
and as in Lemma \ref{convergenciafeFcritical}, simply using the fact that $\|u_{\varepsilon}\|\leq 2\pi/(\alpha_{0}+1)$,
$$\left|\int_{\Omega} f(u_{\varepsilon}) u_{\varepsilon} - \int_{\Omega}f(u_{\varepsilon}) u_{0} \right| = o_{\varepsilon}(1).$$
Then from \eqref{eq:eps} we deduce  $\|u_{\varepsilon}\|\to \|u_{0}\|$ and so
 \begin{equation}\label{eq:forte}
 \lim_{\varepsilon\to0^{+}}u_{\varepsilon}= u_{0} \ \text{ in } \ \ H_{0}^{1}(\Omega).
 \end{equation}

Moreover
 $$
0= J_{\varepsilon}'(u_\varepsilon)[u_\varepsilon] \geq\|u_{\varepsilon}\|^{2}  -\int_{\Omega} f(u_{\varepsilon})u_{\varepsilon}\geq \|u_{\varepsilon}\|^{2} - \tau \int_{\Omega}|u_{\varepsilon}|^{r}\geq
\|u_{\varepsilon}\|^{2} - C \|u_{\varepsilon}\|^{r},
 $$
which implies (being $r>2$) that there exists a constant $h>0$ such that,
\begin{equation*}\label{eq:limiteh}
\forall \varepsilon>0 : \ \ 0<h\leq \|u_{\varepsilon}\|.
\end{equation*}
In particular $u_{0}\neq0$ and then also $\phi_{0}(u_{0})\neq0$; moreover from 
Lemma \ref{estimatetominimizingsequence},
$\|u_{\varepsilon}\|< \overline T(\varepsilon) $
and then  $\overline  T(\varepsilon)\not\to0$ as $\varepsilon\to 0^{+}$.

%
%
%

  
Finally, from $J_{\varepsilon}'(u_{\varepsilon})= 0$ we get
\begin{equation}\label{eq:epsilon}
\forall v\in H^{1}_{0}(\Omega): \int_{\Omega} \nabla u_{\varepsilon} \nabla v 
+\int_{\Omega}\phi_{\varepsilon}(u_{\varepsilon}) u_{\varepsilon}v
=\int_{\Omega} f(u_{\varepsilon})v.
\end{equation}
We want to pass to the limit in $\varepsilon$ in the identity above. 
Since, by Lemma \ref{lem:kavianlem}, $\phi_{\varepsilon}(u_{\varepsilon})\to \phi_{0}(u_{0})$ in $L^{3}(\Omega)$,
$u_{\varepsilon}\to u_{0}$ in $L^{3}(\Omega)$ and $v\in L^{3}(\Omega)$, by the H\"older inequality
we have
\begin{equation}\label{eq:prima}
\lim_{\varepsilon\to0^{+}}\int_{\Omega} \phi_{\varepsilon}(u_{\varepsilon})u_{\varepsilon}v = 
\int_{\Omega} \phi_{0}(u_{0})u_{0}v.
\end{equation}
On the other hand, again as in Lemma \ref{convergenciafeFcritical},
\begin{equation}\label{eq:seconda}
\lim_{\varepsilon\to0^{+}}\int_{\Omega} f(u_{\varepsilon})v =\int_{\Omega}f(u_{0})v.
\end{equation}

Then by \eqref{eq:epsilon}-\eqref{eq:seconda}, we get
$$\forall v\in H^{1}_{0}(\Omega): \int_{\Omega} \nabla u_{0} \nabla v 
+\int_{\Omega}\phi_{0}(u_{0}) u_{0}v
=\int_{\Omega} f(u_{0})v,$$
showing that the pair $u_{0},\phi_{0}:=\phi_{0}(u_{0})$  solves the Schr\"odinger-Poisson system.
Then in view of the first limit in Lemma \ref{lem:kavianlem} and \eqref{eq:forte},
 Theorem \ref{th:2} is completely proved.


\end{document}